\newcommand{\PSLC}{{\bf {PSL}}(2,\C)}
\newcommand{\D} {\mathbb {D}}
\newcommand{\Ha}{{\mathbb {H}}^{2}}
\newcommand{\Ho}{ {\mathbb {H}}^{3}}
\newcommand{\R} {\mathbb {R} }
\newcommand{\Z} {\mathbb {Z}}
\newcommand{\N}{\mathbb {N}}
\newcommand{\C} {\mathbb {C}}
\newcommand{\Q} {\mathbb {Q}}
\newcommand{\wt}{\widetilde}
\newcommand{\wh}{\widehat}
\newcommand{\M}{{\bf {M}}^{3} }
\newcommand{\hl}{{\bf {hl}}}
\newcommand{\len}{{\bf l}}
\newcommand{\Col}{{\mathcal {C}}}
\newcommand{\TB}{T^{1}}
\newcommand{\cl}{{\mathcal {K}}}
\newcommand{\Tr}{{\mathcal {T}}}
\newcommand{\RE}{\operatorname{Re}}
\newcommand{\foot}{\operatorname{foot}}
\newcommand{\Deck}{{\operatorname {Deck}}}
\newtheorem{corollary}{Corollary}[section]
\newtheorem{theorem}{Theorem}[section]
\newtheorem{definition}{Definition}[section]
\newtheorem{lemma}{Lemma}[section]
\newtheorem{conjecture}{Conjecture}[section]
\newtheorem{proposition}{Proposition}[section]
\theoremstyle{remark}
\newtheorem*{remark}{Remark}
\begin{document}

\title[Counting surfaces] {Counting essential surfaces in a closed hyperbolic three manifold}

\author[Kahn]{Jeremy Kahn}
\thanks{JK was supported by NSF grant DMS 0905812}
\address{Mathematics Department \\ Stony Brook University \\ Stony Brook, NY 11794}
\email{kahn@math.sunysb.edu}

\author[Markovic]{Vladimir Markovic}
\address{University of Warwick \\ Institute of Mathematics \\ Coventry,
CV4 7AL, UK}
\email{v.markovic@warwick.ac.uk}

\today

\subjclass[2000]{Primary 20H10}

\begin{abstract} Let $\M$ be a closed hyperbolic three manifold.  We show that the number of genus $g$ surface subgroups of $\pi_1(\M)$ grows like  
$g^{2g}$. 

\end{abstract}

\maketitle

\section{Introduction} Let $\M$ be a closed hyperbolic 3-manifold and let $S_g$ denote a closed surface of genus $g$. Given a continuous mapping $f:S_g \to \M$ we let $f_{*}:\pi_1(S_g) \to \pi_1(\M)$ denote the induced homomorphism. 

\begin{definition} We say that $G<\pi_1(\M)$ is a surface subgroup of genus $g \ge 2$ is there exists a continuous map    $f:S_g \to \M$ such that the induced homomorphism $f_*$ is injective and $f_{*}(\pi_1(S_g))=G$. Moreover, the subsurface $f(S_g) \subset \M$ is said to be an essential subsurface. 
\end{definition}

Recently, we showed \cite{kahn-markovic-1} that every closed hyperbolic 3-manifold $\M$ contains an essential subsurface and consequently $\pi_1(\M)$ contains a surface subgroup.  
It is therefore natural to consider the question: How many conjugacy classes of surface subgroups of genus $g$ there are in $\pi_1(\M)$? 
This has already been considered by Masters  \cite{masters}, and our approach to this question builds on our previous work and improves on the work by Masters. 

Let $s_2(\M,g)$ denote the number of conjugacy classes of surface subgroups of genus at most $g$. We say that two surface subgroups $G_1$ and $G_2$ of $\pi_1(\M)$ are commensurable if $G_1 \cap G_2$ has a finite index in both $G_1$ and $G_2$.  Let $s_1(\M,g)$ denote the number  surface subgroups of genus at most $g$, modulo the equivalence relation of commensurability. Then clearly $s_1(\M,g) \le s_2(\M,g)$.
The main result of this paper is the following theorem.

\begin{theorem}\label{thm-main} Let $\M$ be  a closed hyperbolic 3-manifold. There exist two constants $c_1,c_2>0$   such that 
$$
(c_1 g)^{2g} \le s_1(\M,g) \le s_2(\M,g) \le (c_2g)^{2g},
$$
for  $g$ large enough. The constant $c_2$ depends only on the injectivity radius of $\M$.
\end{theorem}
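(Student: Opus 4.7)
I would split the theorem into a construction giving the lower bound $s_1 \ge (c_1 g)^{2g}$ and a structural enumeration giving the upper bound $s_2 \le (c_2 g)^{2g}$, tying both to pants decompositions.

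\textbf{Lower bound.} My approach relies on the output of \cite{kahn-markovic-1}. For each sufficiently large $R$ and small $\varepsilon > 0$ that paper supplies a large collection $\Pant_R$ of ``good'' skew pants in $\M$, each with cuff complex half-lengths within $\varepsilon$ of $R/2$ and with feet that are $\varepsilon$-equidistributed around each cuff. Closed essential surfaces of genus $g$ are then assembled by taking $2g-2$ pants in $\Pant_R$ and gluing them along $3g-3$ common cuffs with feet offsets close to $1$; the resulting immersion is $\varepsilon$-close to Fuchsian, hence quasi-Fuchsian and $\pi_1$-injective. Counting such assemblies has two ingredients: (i) the combinatorial trivalent-graph template on $2g-2$ vertices, and (ii) the choice of a pants from $\Pant_R$ at each vertex subject to cuff-matching constraints at edges. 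Allowing $R$ to grow mildly with $g$ so that $|\Pant_R|$ is polynomially large in $g$, and exploiting the equidistribution of feet to guarantee that a positive proportion of edge matchings can be realized, one arrives at at least $(c_1 g)^{2g}$ distinct assemblies. To conclude the bound on $s_1$ rather than $s_2$, I would argue that a common finite cover of two of the constructed quotient surfaces would force a commensurability of their combinatorial templates, and so generic choices of distinct templates yield non-commensurable subgroups.

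\textbf{Upper bound.} For the upper bound I would attach to every surface subgroup $G < \pi_1(\M)$ a canonical finite amount of data. Represent $G$ by a pleated immersion $f\from S_g \to \M$ of area $4\pi(g-1)$ and extract a pants decomposition of $S_g$ compatible with the pleating; each cuff maps to a geodesic in $\M$. Using the lower bound on the injectivity radius of $\M$ together with the Margulis lemma, one confines the geometry of each pair of pants to a finite list of options indexed by $\operatorname{inj}(\M)$, with per-cuff twist data also taking only boundedly many values per cuff. The subgroup $G$ is then determined up to conjugacy by (a) a trivalent graph of genus $g$; (b) a bounded decoration per vertex; and (c) a bounded twist per edge. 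Multiplying the combinatorial count (the leading factor $\sim g^g$) by the per-pants and per-cuff decorations produces the bound $(c_2 g)^{2g}$ with $c_2 = c_2(\operatorname{inj}(\M))$.

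\textbf{Main obstacle.} The delicate point in both directions is achieving the exponent $2g$ rather than $g$: the bare enumeration of combinatorial pants decompositions grows only like $g^g$, so an extra factor of $g^g$ has to be accounted for by geometric data in both directions. On the lower-bound side this requires tuning the parameter $R$ so that $|\Pant_R|$ contributes polynomially in $g$ while the cuff-matching constraints remain solvable in many ways, and then lifting non-conjugacy of the constructed surface subgroups to non-commensurability by an essentially reconstruction-theoretic argument. On the upper-bound side the corresponding difficulty is proving that the per-pants data is genuinely bounded by $\operatorname{inj}(\M)$ rather than growing with $g$, which requires a thick-thin analysis of the pleated representative and a careful accounting of long cuffs. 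I expect the upper bound---where every surface subgroup must be forced into a uniformly bounded amount of data per pants---to be the more technical of the two.
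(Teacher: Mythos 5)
Your proposal takes a genuinely different route from the paper on both sides, and in both cases the route as described does not actually reach the exponent $2g$; you correctly flag that as the main obstacle, but the mechanisms you suggest for overcoming it are not the ones the paper uses, and on the upper-bound side the one you suggest would fail.

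\textbf{Upper bound.} The paper does \emph{not} use pants decompositions for the upper bound. Instead it uses Masters' device of a pleated representative $f\colon S\to\M$ whose injectivity radius is bounded below by that of $\M$, then puts a Delaunay triangulation $\tau$ on $S$ in which every edge is a geodesic arc of length at most $s=\operatorname{inj}(\M)$, with at most $kg$ edges and degree at most $k=k(s)$. Covering $\M$ by $m$ balls of radius $s/4$ and encoding each vertex by the ball containing its image accounts for a factor $mK^{kg-1}$, which is only $c^{g}$; the entire $g^{2g}$ factor comes from the combinatorial count of genus-$g$ triangulations (Lemma \ref{lemma-upper}). That lemma is the real content: one chooses a spanning tree $T$ ($\le C12^{kg}$ choices), then adds $2g$ unlabelled edges to it ($\le (kg)^{4g}/(2g)!\sim(k^2g)^{2g}$), then a cyclic ordering at each vertex and a triangulation of each complementary polygon (each only $c^{g}$). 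Your proposal instead attaches a trivalent pants graph to $G$ and claims ``a bounded twist per edge.'' That is false: the twist parameter at a cuff is defined modulo the cuff's complex translation length, cuffs of the pleated surface map to closed geodesics of arbitrary length in $\M$, and the number of essentially distinct twists per cuff grows with the cuff length. So per-edge data is not uniformly bounded by $\operatorname{inj}(\M)$, and even if it were, trivalent graphs contribute only $g^{g}$, so you would be left missing a factor of $g^{g}$, as you observe. The triangulation count is exactly how the paper supplies that missing factor, and you have no substitute for it.

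\textbf{Lower bound.} The paper also does not count pants assemblies. It fixes two nearly-Fuchsian surface subgroups $\rho(i)\colon\pi_1(S(i))\to\Gamma$ from \cite{kahn-markovic-1} sharing a common primitive loxodromic $A$, takes maximal degree-$n$ covers $S_n(1),S_n(2)$ whose deck groups contain a degree-$k$ lift of the common cuff, cuts along those lifts and cross-glues to form $S_n$, inserting a bend of angle $\pi/2$ along the two gluing curves. Theorem \ref{thm-geometry} (a sparse-bending stability statement built on Lemma \ref{lemma-elem-1}) shows $\rho_n(\pi_1(S_n))$ is quasifuchsian, and Theorem \ref{thm-rigid} (a convex-hull rigidity statement modelled on Masters' Lemma 4.2) shows the bent curves are detectable from the limit set, which is then used to prove that $\rho_n(\pi_1(S_n))$ is a \emph{maximal} surface subgroup. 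Maximality is what gives the bound on $s_1$: distinct maximal subgroups lie in distinct commensurability classes, so one just counts maximal covers via Muller--Puchta \cite{muller-puchta}. Your suggestion to recover non-commensurability by ``a reconstruction-theoretic argument'' matching combinatorial templates is precisely the hard step; the paper replaces it with the geometric maximality argument, and without something of that sort your lower bound only controls $s_2$, not $s_1$. Moreover the $g^{2g}$ growth comes from the subgroup-growth asymptotics of surface groups, not from a factor ``$|\Pant_R|$ polynomial in $g$'' at each vertex; your count as written does not produce the right exponent even for $s_2$, since a polynomially-large pants set at each of $2g-2$ vertices, constrained by cuff matching, is not simply a product.

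In short: the structure you aim for (construction for the lower bound, encoding for the upper bound) matches the paper at the top level, but both of the paper's key inputs --- the triangulation-counting lemma for the upper bound, and the bending/rigidity/amalgamation machinery plus Muller--Puchta for the lower bound --- are absent from your plan, and the substitutes you offer (bounded per-cuff data; reconstruction of templates) would not go through.
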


In fact, Masters shows that 

$$
s_2(g,\M)<g^{c_{2}g}
$$
for some $c_2 \equiv c_2(\M)$, and likewise for some  $c_1 \equiv c_1(\M)$

$$
g^{c_{1}g}<s_1(g,\M)
$$
when $\M$ has a self-transverse totally geodesic subsurface. We follow Masters' approach to the upper bound, improving it
from $g^{c_{2}g}$ to $(c_2 g)^{2g}$ by more carefully counting the number of suitable triangulations of a genus $g$ surface.
Using our previous work \cite{kahn-markovic-1}  we replace Masters' conditional lower bound with an unconditional
one, and we improve it from $g^{cg}$ to $(c_1 g)^{2g}$ with the work of Muller and Puchta \cite{muller-puchta} counting number of
maximal surface subgroups of a given surface group. We then make new subgroup from old in the spirit of Masters' construction, but
taking the nearly geodesic subgroup from \cite{kahn-markovic-1}  as our starting point.

The above theorem enables us to determine the order of the number of surface subgroups up to genus $g$. We have the following corollary.

\begin{corollary}\label{cor-main} We have
$$
\lim_{g \to \infty} {{\log s_1(\M,g)}\over{2g \log g}}=\lim_{g \to \infty} {{\log s_2(\M,g)}\over{2g \log g}}=1.
$$
\end{corollary}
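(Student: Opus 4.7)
The plan is to deduce this directly from Theorem \ref{thm-main} by taking logarithms. Since the theorem gives the two-sided bound
$$
(c_1 g)^{2g} \le s_1(\M,g) \le s_2(\M,g) \le (c_2 g)^{2g}
$$
for all sufficiently large $g$, applying $\log$ yields
$$
2g(\log g + \log c_1) \le \log s_1(\M,g) \le \log s_2(\M,g) \le 2g(\log g + \log c_2).
$$

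Dividing through by $2g \log g$ (which is positive for $g \ge 2$) gives
$$
1 + \frac{\log c_1}{\log g} \le \frac{\log s_1(\M,g)}{2g \log g} \le \frac{\log s_2(\M,g)}{2g \log g} \le 1 + \frac{\log c_2}{\log g}.
$$
As $g \to \infty$, both the leftmost and rightmost quantities tend to $1$, and the squeeze theorem forces both of the middle ratios to tend to $1$ as well. This establishes both equalities simultaneously.

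I do not expect any obstacle here: the corollary is essentially a restatement of Theorem \ref{thm-main} on the logarithmic scale, since the $2g \log g$ term dominates the contributions from the constants $c_1, c_2$ in the exponent. All of the genuine content lies in Theorem \ref{thm-main} itself.
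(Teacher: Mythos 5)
Your argument is correct and is exactly the intended deduction: the paper states Corollary \ref{cor-main} as an immediate consequence of Theorem \ref{thm-main} without writing out a proof, and taking logarithms, dividing by $2g\log g$, and squeezing is precisely how one fills that gap.
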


We make the following conjecture.

\begin{conjecture} For a given closed hyperbolic 3-manifold $\M$,  there exists a constant $c(M)>0$ such that 
$$
\lim_{g \to \infty} {{1}\over{g}} \sqrt[2g]{s_i(\M,g)}=c(M), \, i=1,2.
$$
\end{conjecture}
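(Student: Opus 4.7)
The plan is to sharpen both sides of Theorem \ref{thm-main} so that $c_1$ and $c_2$ can be taken arbitrarily close to a common value $c(\M)$. From the already-established bounds one has
$$ c_1 \;\le\; \liminf_{g \to \infty} \frac{1}{g}\sqrt[2g]{s_i(\M,g)} \;\le\; \limsup_{g \to \infty} \frac{1}{g}\sqrt[2g]{s_i(\M,g)} \;\le\; c_2, $$
so the conjecture reduces to showing that the supremum of admissible $c_1$ equals the infimum of admissible $c_2$, and to identifying the common value. Both $i=1$ and $i=2$ should then follow once it is shown that the ratio $s_2(\M,g)/s_1(\M,g)$ grows at most as $(1+o(1))^{2g\log g}$, i.e., commensurability does not create a contribution of the leading order.

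For the lower bound, I would refine the construction from \cite{kahn-markovic-1} by enumerating nearly geodesic assemblies of $2g-2$ pants whose cuffs have length within $\epsilon$ of a chosen $R \sim c\log g$. A Weyl-type equidistribution count of such pants in $\M$, combined with the Muller--Puchta enumeration \cite{muller-puchta} of subgroups of a fixed surface group, should yield a leading asymptotic of the form $(c(\M) g)^{2g}(1+o(1))^g$, where $c(\M)$ is expressed in terms of the asymptotic density of good pants in $\M$. For the upper bound, I would push the Masters-style triangulation argument further, counting not only triangulations of $S_g$ but also the number of essential simplicial realizations in $\M$ of any fixed triangulation, and try to match the same leading constant.

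The principal obstacle is threefold. First, extracting a sharp constant from the pants count requires a much more delicate equidistribution statement than the mere counting used to prove existence; one needs an asymptotic for the density of good pants of a given length rather than just a lower bound. Second, the coincidence $s_1 \sim s_2$ to leading order demands a rigidity statement controlling the commensurator of a nearly geodesic surface subgroup, ensuring that commensurable subgroups are not abundant enough to contribute at the top exponential scale. Third, and probably hardest, matching the pants-construction lower bound with the triangulation upper bound effectively requires proving that the pants construction is \emph{generic}: almost every conjugacy class of genus $g$ surface subgroup should admit a representative arising as an assembly of pants with $R \sim c \log g$. Such a genericity statement goes beyond the existence-oriented methods of \cite{kahn-markovic-1}, and in my view is the true arithmetic core of the conjecture.
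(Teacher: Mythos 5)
This statement is a \emph{conjecture} in the paper; the authors state it and leave it open, so there is no proof in the paper to compare your attempt against. Your proposal is honest about being a research program rather than a proof, and the three obstacles you name at the end are genuine. Two remarks are worth making nonetheless.

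First, your opening ``reduction'' is tautological. The supremum of admissible $c_1$ in the eventual inequality $(c_1 g)^{2g} \le s_i(\M,g)$ is precisely $\liminf_{g\to\infty} g^{-1}\sqrt[2g]{s_i(\M,g)}$, and the infimum of admissible $c_2$ is precisely $\limsup_{g\to\infty} g^{-1}\sqrt[2g]{s_i(\M,g)}$; saying these two agree is word-for-word the statement that the limit exists, i.e.\ the conjecture itself. No work has been offloaded by that sentence, and the appearance of a reduction is misleading.

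Second, the central difficulty is somewhat undersold. Even with a sharp Weyl-type equidistribution theorem for $(\epsilon,R)$-good pants, passing from a density of pants to an \emph{asymptotic count} of conjugacy classes of closed genus-$g$ assemblies requires (i) enumerating gluings of $O(g)$ pants with compatible reduced Fenchel--Nielsen data, (ii) resolving the highly nontrivial overcounting that occurs when distinct assemblies produce the same subgroup of $\Gamma$, and (iii) showing that assemblies of good pants with $R\sim c\log g$ exhaust \emph{almost all} genus-$g$ surface subgroups, not merely exponentially many. Item (iii) is your ``genericity'' obstacle and, as you say, nothing in \cite{kahn-markovic-1} or in the triangulation upper bound addresses it. Likewise, the claim that $s_2(\M,g)/s_1(\M,g)$ is subexponential in $2g\log g$ is asserted as something that ``should'' follow but no mechanism is offered; controlling how many commensurability classes split into how many conjugacy classes is itself a hard rigidity question. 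In short, your outline is a reasonable description of what a proof might ultimately look like, but it does not close, nor meaningfully reduce, the gaps it identifies.
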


%Let $\Mod_g$ denote the Moduli space of $S_g$.  For $\epsilon>0$ we let $\Thick_{\epsilon}(\Mod_g)$ denote the $\epsilon$ thick part of $\Mod_g$ (a Riemann surface $S$ belongs to 
%$\Thick_{\epsilon}(\Mod_g)$ if its injectivity radius is bounded below by $\epsilon$). Let $R>0$ and let $N(R,\epsilon,g)$ be the minimal number of balls of radius $R$ 
%(with respect to the Teichm\"uller metric) that are needed to cover  $\Thick_{\epsilon}(\Mod_g)$. We will prove the following counting result.

%\begin{theorem}\label{thm-moduli} Let $R,\epsilon>0$. Then there are constants $c_1,c_2>0$ (that depend only on $R$ and $\epsilon$) such that
%$$
%(c_1g)^{2g} \le N(R,\epsilon,g) \le (c_2g)^{2g},
%$$
%for $g$ large. In particular, we have

%$$
%\lim_{g \to \infty} {{\log N(R,\epsilon,g)}\over{2g \log g}}=1.
%$$

%\end{theorem}

\section{The upper bound} Fix a closed hyperbolic 3-manifold $\M$. In this section we prove the upper bound in Theorem \ref{thm-main}, that is we show 

\begin{equation}\label{upper}
s_2(\M,g) \le (c_2g)^{2g}, 
\end{equation}
for some constant $c_2>0$.

\subsection{Genus $g$ triangulations} We have the following definition.

\begin{definition} Let   $S_g$ denote a closed surface of genus $g$. We say that a connected graph $\tau$ is a triangulation of genus g if it can be embedded into the surface $S_g$ such that every component of the set $S_g \setminus \tau$ is a triangle. The set of genus $g$ triangulations is denoted by $\Tr(g)$. We say that  $\tau \in \Tr(k,g) \subset \Tr(g)$ if: 
\begin{itemize}
\item each vertex of $\tau$ has the degree at most $k$, 
\item the graph $\tau$ has at most $kg$ vertices and edges.
\end{itemize}
\end{definition}

We observe that any given genus $g$ triangulation $\tau$, can be in a unique way (up to a homeomorphism of $S_g$) be embedded in $S_g$.

We say that Riemann surface is $s$-thick is its injectivity radius is bounded below by $s>0$. Every thick Riemann surface has a good triangulation.

\begin{lemma}\label{triangulation} Let $S$ be an $s$-thick Riemann surface of genus $g \ge 2$.  Then there exists $k=k(s)>0$ and a triangulation $\tau \in \Tr(k,g)$
that embeds in $S$,  such that
\begin{enumerate}
\item Every edge of $\tau$ is a geodesic arc of length at most $s$,
\item The triangulation $\tau$ has at most $kg$ vertices and edges,
\item The degree of each vertex is at most $k$.

\end{enumerate}

\end{lemma}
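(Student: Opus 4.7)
The plan is to realize $\tau$ as a triangulated refinement of the Delaunay complex associated to a maximal $(s/2)$-separated net in $S$. First I would choose $V=\{v_1,\dots,v_n\}\subset S$ maximal with $\dist(v_i,v_j)\geq s/2$ for $i\neq j$. Since $S$ is $s$-thick, the balls $B(v_i,s/4)$ embed isometrically as hyperbolic disks of a fixed area $a(s)>0$ and are pairwise disjoint, so $n\cdot a(s)\leq \Area(S)=4\pi(g-1)$, giving $n\leq C_1(s)\,g$. This will be the vertex set of $\tau$, and it already handles bound (2) up to a constant, since once we know $\tau$ is a triangulation Euler's formula combined with $3F=2E$ forces $E,F=O(V+g)=O(g)$.

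Next I would form the Voronoi decomposition of $S$ relative to $V$ and let $\tau$ be its geodesic dual: join $v_i$ and $v_j$ by a geodesic arc whenever the Voronoi cells $C_i$ and $C_j$ share a positive-length boundary arc. By maximality every point of $S$ lies within $s/2$ of some $v_i$, so each $C_i$ is contained in $B(v_i,s/2)$ and the whole construction takes place at scales strictly below the injectivity radius. This is the key technical point: it makes the relevant geodesic arcs unique and short, and ensures that locally the picture is indistinguishable from a Euclidean Delaunay diagram, hence a genuine embedded planar cell complex. If some Delaunay face $D$ is a polygon of more than three sides (which can happen when several $v_i$ are cocircular), I triangulate it by diagonals drawn from one chosen vertex.

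For property (1), let $D$ be any such (possibly subdivided) Delaunay face with circumcenter $c$ and circumradius $R$. The empty-ball property of Voronoi diagrams says $B(c,R)$ contains no point of $V$, while maximality of $V$ produces some $v\in V$ with $\dist(c,v)\leq s/2$; together these force $R\leq s/2$. Hence every edge of $\tau$, including the added diagonals, lies inside a disk of radius $\leq s/2$ and has length at most $2R\leq s$. For property (3), any neighbor of $v_i$ in $\tau$ lies in $B(v_i,s)$, and the set of all such neighbors is itself $(s/2)$-separated; a second area comparison with disjoint $(s/4)$-balls bounds their number by a constant $k(s)$, which we enlarge if necessary to absorb the constants from the vertex-counting step.

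The main obstacle is the ``cocircular'' case, in which the naive Delaunay dual is not immediately a triangulation. The circumradius estimate above is exactly what keeps the polygon subdivision under control, so the heart of the proof is verifying that this geometric Delaunay construction yields an embedded triangulation of $S$ rather than a merely immersed graph; this is where the $s$-thickness hypothesis enters decisively, through the requirement that all combinatorics take place at scales well below the injectivity radius.
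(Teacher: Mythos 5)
Your proposal is essentially the paper's own argument: both build the Delaunay triangulation on the centers of a maximal packing by disjoint $(s/4)$-balls (equivalently, a maximal $(s/2)$-separated net) and then verify the edge-length, degree, and cardinality bounds by circumradius and area-comparison estimates taking place below the injectivity radius. The only difference is cosmetic: the paper removes cocircular degeneracies by perturbing $V$ into general position before taking the Delaunay graph, whereas you keep $V$ fixed and triangulate any non-triangular Delaunay cell by diagonals; both devices are standard and yield the same conclusion.
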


\begin{proof} Choose a a maximal collection of disjoint open balls in $S$ of radius ${{s}\over{4}}$. Let $V$ denote the set of centers of the balls from the collection. 
We may assume that no four points from $V$ lie on a round  circle (we always  reduce the radius of the balls by a small amount and move them into a general position). We construct the Delaunay triangulation associated to the set $V$ as follows. We connect two points from $V$ with the shortest geodesic arc between them, providing they belong to the boundary of a closed ball in $S$ that does not contain any other point from $S$. This gives an embedded graph $\tau$. Since no four points from $V$ lie on the same circle the graph $\tau$ is a triangulation.
It is elementary to check that $\tau$ has the stated properties, and we leave it to the reader.
\end{proof}

Given any injective immersion of $g:S_g \to \M$,  we can find a genus $g$ hyperbolic surface $S$, and a map $f:S \to \M$   homotopic to $g$, such that $f(S)$ is a pleated surface. Then $f$ does not increase the hyperbolic distance. Let $s$ denote the injectivity radius of $\M$. It follows that the injectivity radius of $S$ is bounded below by  $s$. We choose  a triangulation $\tau(S)$ of $S$ that satisfies the conditions in Lemma \ref{triangulation}.

Let $\Col=\{C_1,...,C_m \}$ be a finite collection of balls of radius ${{s}\over {4}}$ that covers $\M$. We may assume that $\Col$ is a minimal collection, that is, if we remove a ball from $\Col$, the new collection of balls does not cover $\M$. Let $f_i:S_i \to \M$, $i=1,2$, be two pleated maps,
and denote by $\tau(S_1)$ and $\tau(S_2)$ the corresponding triangulations of genus $g$ surfaces $S_1$ and $S_2$. If the genus $g$ triangulations $\tau(S_1)$ and $\tau(S_2)$ are identical, there
exists a homeomorphism $h:S_1 \to S_2$ such that $h(\tau(S_1))=\tau(S_2)$. Assume in addition that for every vertex $v$ of $\tau(S_1)$, the points $f_1(v)$ and $f_2(h(v))$ belong to the same ball $C_i \in \Col$. Then by  Lemma 2.4 in \cite{masters}, the maps $f_1$ and $f_2$ are homotopic.

Since  the set $\Col$ has $m$ elements, there are at most $m$ ways of mapping a given vertex of $\tau$ to the set $\Col$. Choose a vertex $v_1$ of $\tau$ and choose an image of $v_1$ in $\Col$, say $v_1$ is mapped to $C_1$. Let $v_1$ be a vertex of $\tau$, such that $v_0$ and $v_1$ are the endpoints of the same edge. Since each edge of $\tau$ has the length at most $s$, and the balls from $\Col$ have the radius ${{s}\over{4}}$. Since $f$ does not increase the distance, and $\Col$ is a minimal cover of $\M$, it follows that $v_1$ can be mapped to at most $K$ elements of $\Col$, where $K$ is a constant that depends only on $s$. Repeating this analysis yields the following estimate:

\begin{equation}\label{upper-1}
\wt{s}_2(\M,g) \le m K^{kg-1} |\Tr(k,g)|,
\end{equation}
where $\wt{s}_2(\M,g)$ denotes the number of conjugacy classes of surface subgroups of genus equal to $g$.

Let $\nu(k,n)$ denote the set of all graphs on $n$ vertices so that each vertex has the degree at most $k$.
Then  $|\Tr(k,g)| \le |\nu(k,kg)|$. 

\begin{remark} Observing the estimate 
$$
|\nu(k,n)|\le n^{kn},
$$
Masters showed
$$
\wt{s}_2(\M,g) \le g^{Dg},
$$
for some constant $D>0$. However, the set $\nu(k,kg)$ has many more elements than the set $\Tr(k,g)$. 
\end{remark}

The following  lemma will be proved in the next subsection.

\begin{lemma}\label{lemma-upper} There exists a constant $C>0$ that depends only on $k$,  such that for $g$ large we have 
$$
|\Tr(k,g)| \le (Cg)^{2g}.
$$
\end{lemma}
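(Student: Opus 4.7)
The plan is to encode each triangulation $\tau \in \Tr(k,g)$ as a pair consisting of (i) a triangulated polygon obtained by cutting $S_g$ open along a suitable set of edges, and (ii) the pairing of the polygon's boundary edges that reassembles the surface, and then to count each ingredient separately.

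First I would fix a vertex count $V \le kg$, so $\tau$ has $F = 2V + 4g - 4$ triangles and $E = 3V + 6g - 6$ edges. After rooting $\tau$ by a choice of oriented flag, I form the canonical breadth-first spanning tree $T^*$ of the dual $3$-regular graph $\tau^*$. Cutting $S_g$ along the $V + 2g - 1$ non-tree edges unfolds the surface into a triangulated disk. A short Euler-characteristic calculation (using $V_d - E_d + F = 1$ together with $E_d = (F-1) + (F+2)$) shows that this disk has $F+2$ boundary edges and \emph{no} interior vertices, so it is simply a triangulation of a convex $(F+2)$-gon; the number of such triangulations is the Catalan number $C_F \le 4^F$.

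Next I would count the number of ways to identify the $2m := F+2$ boundary edges (where $m = V + 2g - 1 \le (k+2)g$) in $m$ orientation-reversing pairs so that the reconstructed closed surface is orientable of genus exactly $g$. This is the classical quantity $\epsilon_g(m)$ studied by Harer and Zagier, and the Harer--Zagier formula yields a bound of the form $\epsilon_g(m) \le \gamma^m \cdot m^{2g}$ for an absolute constant $\gamma$. This is the step that produces the exponent $2g$.

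Combining these and summing over $V \in \{1,\ldots,kg\}$ (and noting $|\Tr(k,g)|$ is bounded by the number of flagged triangulations, which in turn is bounded by the number of (disk, pairing) pairs), one obtains
$$
|\Tr(k,g)| \;\le\; kg \cdot 4^{(2k+4)g} \cdot \gamma^{(k+2)g} \cdot ((k+2)g)^{2g}.
$$
The purely exponential factor $kg \cdot (4^{2k+4} \gamma^{k+2})^g$ has the form $A \cdot B^g = A \cdot (B^{1/2})^{2g}$, which combines with $((k+2)g)^{2g}$ to produce $(Cg)^{2g}$ for a suitable $C = C(k)$ and all $g$ sufficiently large. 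The main obstacle is the Harer--Zagier estimate $\epsilon_g(m) \le \gamma^m \cdot m^{2g}$: although $(2m-1)!!$ trivially bounds the total number of pairings, only a polynomial-in-$m$ of degree $2g$ fraction of them yields genus exactly $g$, and this polynomial-degree control is precisely what forces the final exponent to be $2g$ rather than $\Theta(kg)$.
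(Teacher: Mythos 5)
Your proposal is correct, but it follows a genuinely different route from the paper's. The paper builds the triangulation from the \emph{primal} side: choose a spanning tree $T$ of $\tau$ (at most $2C\cdot 12^{kg}$ choices), add $2g$ extra edges (the tree--cotree count), choose cyclic orderings at the vertices to fix the ribbon structure, and finally triangulate the complementary polygons $P_i$. In that argument the decisive exponent $2g$ comes from the elementary estimate
$$
b \;\le\; \frac{(kg)^{4g}}{(2g)!} \;<\; (k^2 g)^{2g},
$$
i.e.\ from dividing the $(kg)^{4g}$ labelled choices of $2g$ extra edges by $(2g)!$; everything else is merely exponential in $g$. You instead work on the \emph{dual} side: root $\tau$ by a flag, take the canonical BFS spanning tree $T^*$ of the cubic dual graph, cut along the primal edges dual to $E(\tau^*)\setminus T^*$ (which, by the tree--cotree decomposition, form a connected spanning subgraph of $\tau$, so the cut disk has no interior vertices), and then encode $\tau$ as a triangulated $(F{+}2)$-gon together with a genus-$g$ side pairing of its boundary. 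In your version the $2g$ exponent comes not from an elementary division by $(2g)!$, but from the number-theoretic structure of $\epsilon_g(m)$, the count of genus-$g$ gluings of a $2m$-gon. This is an elegant reformulation in the language of one-face maps, and the Euler-characteristic bookkeeping ($m=V+2g-1$, $2m=F+2$, no interior vertices) is correct.

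One caveat you should address if you were to write this up fully: the inequality $\epsilon_g(m)\le \gamma^m m^{2g}$ is not a standard off-the-shelf citation. It does hold, and it can be extracted by a short induction from the Harer--Zagier recurrence
$$
(n+1)\,\epsilon_g(n) \;=\; 2(2n-1)\,\epsilon_g(n-1) \;+\; (2n-1)(2n-2)(2n-3)\,\epsilon_{g-1}(n-2),
$$
with base case $\epsilon_0(n)=C_n\le 4^n$; taking, say, $\gamma=6$ makes the induction close since $4/\gamma+8/\gamma^2<1$. But the generating-function form of the Harer--Zagier theorem as usually quoted does not hand you this pointwise bound directly, and the regime you need ($g$ and $m$ both proportional to $g$, so $m$ only a bounded multiple of the minimum $2g$) is not the fixed-$g$, large-$m$ regime in which $\epsilon_g$ is most often discussed. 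So the step you flag as the ``main obstacle'' is indeed the one that requires a genuine argument rather than a citation. Once that is supplied, your bound $kg\cdot 4^{(2k+4)g}\cdot\gamma^{(k+2)g}\cdot\bigl((k+2)g\bigr)^{2g}\le (Cg)^{2g}$ goes through. Net comparison: the paper's argument is more elementary and self-contained, while yours is conceptually cleaner (the genus constraint directly forces a polynomial count of pairings) but imports a nontrivial combinatorial estimate.
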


Given this lemma we now prove estimate (\ref{upper}).  It follows from the Lemma  \ref{lemma-upper} that for every $g$ large we have
$$
|\Tr(k,g)| \le (Cg)^{2g}.
$$
Combining this with (\ref{upper-1}) we get
$$
\wt{s}_2(\M,g) \le mK^{kg-1}(Cg)^{2g} \le (C_1 g)^{2g},
$$
holds for every $g \ge 2$, for some constant $C_1$. 
Then

\begin{align*}
s_2(\M,g)
&=\sum\limits_{r=2}^{g} \wt{s}_2(\M,r) \\
&=\sum\limits_{r=2}^{g}   (C_1 r)^{2r}  \\
& \le (c_2g)^{2g},
\end{align*}
for some constant $c_2$. This proves the estimate (\ref{upper}).

\subsection{The proof of Lemma \ref{lemma-upper} }

Fix a triangulation $\tau \in \Tr(k,g)$ and denote the set of oriented edges by $E(\tau)$.  
Let $\Q E(\tau)$ denote the vector space of all formal sums (with rational coefficients) of edges from $E(\tau)$. 

Choose a spanning tree $T$ (a spanning tree of a connected graph is a connected tree that contains all of its vertices) for $\tau$.
Let $H_1(S_g)$ denote the first homology with rational coefficients of the surface $S_g$. We define the linear map $\phi:\Q E(\tau) \to H_1(S_g)$ as follows.
Let $e \in (E(\tau) \setminus T)$. Then the union $e \cup T$ is homotopic (on $S_g$) to a unique (up to homotopy) simple closed curve $\gamma_e \subset S_g$. 
We let $\phi(e)$ denote the homology class of the curve $\gamma_e$ in  $H_1(S_g)$. We extend the map $\phi$ to $\Q E(\tau)$ by linearity.

Denote the kernel of $\phi$ by $K(\phi)$ and set
$$
H_1(\tau,T)={{ \Q E(\tau)}\over{K(\phi)}}.
$$
Then the quotient map (also denoted by) $\phi:H_1(\tau,T) \to H_1(S_g)$ is injective, and in fact it an isomorphism.  Since $\tau$ is a genus $g$ triangulation,  
the embedding of the triangulation $\tau$ to $S_g$ induces the surjective map of the fundamental group of $\tau$ to the fundamental group of $S_g$. Then the induced map $\phi$ between the corresponding homology groups is injective.
 
Let $e_1,...,e_{2g} \in E(\tau)$ denote a set of $2g$ edges whose equivalence classes generate $H_1(\tau,T)$. 

\begin{lemma}\label{lemma-h} Let $X=T \cup \{e_1,...,e_{2g} \}$. Then every component of the set $S_g \setminus X$ is simply connected.
\end{lemma}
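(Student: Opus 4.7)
The plan is to deduce the topological claim from the homological hypothesis by showing that $H_1(S_g, X; \Q)$ vanishes and then applying Lefschetz duality to the compact complementary subsurfaces.

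First I would compute $H_1(X; \Q)$. Since $T$ is a spanning tree of $\tau$ and $X$ is obtained from $T$ by adjoining the $2g$ edges $e_1,\ldots,e_{2g}$, the graph $X$ has first Betti number $2g$, and the loops $\gamma_{e_j}$ (each $e_j$ concatenated with the unique path in $T$ joining its endpoints) form a basis for $H_1(X;\Q)$. Under the inclusion $\iota \colon X \hookrightarrow S_g$ these classes map to $\phi(e_j) \in H_1(S_g;\Q)$, which by hypothesis span $H_1(S_g;\Q) \cong \Q^{2g}$. A surjection between $\Q$-vector spaces of equal dimension $2g$ is an isomorphism, so $\iota_*\colon H_1(X;\Q) \to H_1(S_g;\Q)$ is an isomorphism.

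Next I would run the long exact sequence of the pair $(S_g, X)$ in rational homology. Using that $\iota_*$ is surjective on $H_1$, that both $X$ and $S_g$ are connected (so $\iota_*$ is injective on $H_0$), and that $H_2(X) = 0$, the sequence forces $H_1(S_g, X; \Q) = 0$. To interpret this geometrically I pass to a closed regular neighbourhood $N$ of $X$ in $S_g$ and write $\overline{S_g \setminus N} = F_1 \sqcup \cdots \sqcup F_n$, where each $F_i$ is a compact orientable subsurface with boundary whose interior is precisely a connected component of $S_g \setminus X$. Since $N$ deformation retracts to $X$ (and $(S_g, N)$ is a good pair), excision and the quotient identification $S_g/N \simeq \bigvee_i F_i/\partial F_i$ yield
$$
0 \;=\; H_1(S_g, X; \Q) \;\cong\; \bigoplus_{i=1}^{n} H_1(F_i, \partial F_i; \Q).
$$

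Finally, Lefschetz duality gives that a compact orientable surface $F_i$ of genus $g_i$ with $b_i \ge 1$ boundary circles satisfies $H_1(F_i, \partial F_i; \Q) \cong H^1(F_i; \Q) \cong \Q^{2g_i + b_i - 1}$. The vanishing of each summand therefore forces $g_i = 0$ and $b_i = 1$, so each $F_i$ is a disk and each component of $S_g \setminus X$ is simply connected. The only step requiring some care is the excision: because $X$ is a graph rather than a submanifold, I avoid excising $X$ directly and instead replace it by the regular neighbourhood $N$, which is a genuine compact subsurface and carries the same relative homology. Everything else is standard algebraic topology.
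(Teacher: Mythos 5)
Your proof is correct, but it takes a genuinely different route from the paper's. The paper argues contrapositively and geometrically: it assumes some complementary component is not simply connected, extracts an essential simple closed curve $\gamma$ disjoint from $X$, and splits into cases. If $\gamma$ is non-separating, one finds $\alpha$ meeting $\gamma$ once, expands $[\alpha]$ in the basis $\{\phi(e_i)\}$, and uses the nontriviality of the intersection pairing $[\alpha]\cdot[\gamma]$ to force $\gamma$ to cross some $e_i\cup T$, a contradiction. If $\gamma$ is separating, the connectedness of $X$ confines it to one side, and one finds a non-separating curve on the other side to reduce to the first case. Your argument replaces this curve-by-curve analysis with a single homological computation: you show $\iota_*\colon H_1(X;\Q)\to H_1(S_g;\Q)$ is an isomorphism (both have dimension $2g$, surjectivity is the hypothesis), feed this into the long exact sequence of the pair to get $H_1(S_g,X;\Q)=0$, localize this via a regular neighbourhood and excision to $\bigoplus_i H_1(F_i,\partial F_i;\Q)=0$, and then invoke Lefschetz duality to force each $F_i$ to be a disk. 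What your approach buys is uniformity (no case split and no need to produce a dual curve $\alpha$ by hand) and a slightly sharper conclusion (each component is an open disk, not merely simply connected); what it costs is more machinery (regular neighbourhoods, excision for non-manifold subcomplexes, Lefschetz duality) where the paper gets by with intersection numbers and the classification of subsurfaces cut along a simple closed curve. Both are standard and both are rigorous; the underlying mechanism is the same (the intersection pairing is the Poincar\'e--Lefschetz duality that you invoke explicitly), just packaged at different levels of abstraction.
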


\begin{proof} The set $X$ is connected (since it contains the spanning tree $T$, and the tree $T$ contains all the vertices). 
Suppose that there exists a component of the set $S_g \setminus X$ that is not simply connected. Then there exists a simple closed curve $\gamma \subset S_g$ that is not 
homotopic to a point, and such that 
$$
\gamma \cap  X  =\emptyset.
$$

If $\gamma$ is a non-separating curve then the homology class of  $\gamma$ is non-trivial in $H_1(S_g)$. Therefore, there exists  a non-separating simple closed $\alpha \subset S_g$ that  intersects the curve $\gamma$ exactly once. Let $q_1,...,q_{2g} \in \Q$ be such that 
$$
\phi(q_1e_1+...+q_{2g}e_{2g})=[\alpha],
$$
where $[\alpha] \in H_1(S_g)$ denotes the homology class of $\alpha$.
Since the intersection pairing between $[\alpha]$ and $[\gamma]$ is non-zero, and $\phi(e_1),...,\phi(e_{2g})$ is a basis for $H_1(S_g)$, we conclude that for some $i \in \{1,...,2g\}$, 
the curve $\gamma$ intersects $e_i \cup T$, which is a contradiction.

Suppose that  $\gamma$ is a separating curve and denote by $A_1$ and $A_2$ the two components of the set  $S_g \setminus \gamma$. The set $X$ is connected, and by the assumption it does not intersect
$\gamma$. This implies that $X$ is contained in one of the two sub-surfaces $A_i$, say $X \subset A_1$. Then $X \cap A_2=\emptyset$.

Since $\gamma$ is not homotopic to a point, each $A_i$ is a non-planar surface with one boundary component. Therefore, the subsurface $A_2$ contains  a non-separating simple closed curve $\gamma_2$. Then $\gamma_2$ is a non-separating simple closed curve in $S_g$ by the above argument we have that $\gamma_2$ intersects  the set $X$. This is a contradiction since $X \cap A_2=\emptyset$.

\end{proof}

Let $P_1,...,P_l$ denote the components of the set $S_g \setminus X$. Each $P_i$ is a polygon and we let $m_i$ denote the number of sides of the polygon $P_i$. 
Since each edge in $X$ can appear as a side in at most two such polygons, we have the inequality

\begin{equation}\label{sides}
\sum\limits_{i=1}^{l}m_i \le 2kg,
\end{equation}
since by definition the triangulation $\tau$ has at most $kg$ edges.

We  proceed to prove Lemma \ref{lemma-upper}.  We can obtain every triangulation $\tau \in \Tr(k,g)$ as follows. We first choose a spanning tree $T$, which is a tree that has at most $kg$ vertices. 
Then to the tree $T$ we add $2g$ edges
$e_1,...,e_{2g}$ in an arbitrary way. After adding the edges, at each vertex of the graph $T \cup \{e_1,...,e_{2g} \}$ we choose a cyclic ordering. We thicken the edges of the graph $T \cup \{e_1,...,e_{2g} \}$ to obtain the ribbon graph and the corresponding  surface $R$  with boundary (if this surface does not have genus $g$ we discard this graph). The boundary components of the surface 
$R$ are  polygonal curves  $P_i$, $i=1,..,l$,  made out of the edges from  $T \cup \{e_1,...,e_{2g} \}$. We then choose a triangulation of each  polygon $P_i$.

It follows from this description that we can bound the number of triangulations from $\Tr(k,g)$ by
$|\Tr(k,g)|\le abcd$, where

$$
a=\{\text{number of unlabelled trees}\, T \, \text{with}\,  n \le kg \,\, \text{vertices} \} ,
$$

$$ 
b=\{ \text{number of ways of adding}\,  2g \, \text{unlabelled edges}\,\, e_1,...e_{2g} \, \text{to}\, T \} ,
$$

$$
c= \{ \text{number of  cyclic orderings of edges of}\, T \cup \{e_1,...,e_{2g} \} \},
$$

$$
d= \{ \text{number of triangulations  of the polygons}\,\,  P_i \}.
$$

Let $t(n)$ denote the number of different unlabelled trees on $n$ vertices. By \cite{bender-canfield} we have $t(n) \le C 12^{n}$, for some universal constant $C>0$. It follows that
$a \le 2C12^{kg}$. The tree $T$ has at most $kg$ edges, so there are at most $(kg)^{2}$ ways of adding a labelled edge to $T$. All together there are at most $(kg)^{4g}$ ways of adding a labelled collection of $2g$ edges to $T$. To obtain the number of ways of adding unlabelled collection of $2g$ edges we need to divide this number by $(2g)!$. This yields the estimate 
$$
b \le {{(kg)^{4g} }\over {(2g)!}} <( k^{2} g)^{2g},
$$
for $g$ large.  

Since each vertex of $\tau$ has the degree at most $k$, and $\tau$ has at most $kg$ edges, we obtain the estimate 
$$
c \le (k!)^{kg}.
$$

Let $p(m)$ denote the number of triangulations of a polygon with $m$ sides. Then $p(m)$ is the $(m-2)$-th Catalan number and we have $p(m) < 2^{2m}$.  
As above,  let $P_1,...,P_l$ denote the polygons that we need to triangulate and let $m_i$ denote the number of sides of the polygon $P_i$. 
Then 
$$
d \le \max \Pi_{i=1}^{l} p(m_i) \le  \max \le 4^{m_{1}+...+m_{l} } ,
$$
where the maximum is taken over all possible vectors $(m_1,...,m_l)$, $1 \le l \le 2kg$, such that  $m_1+...+m_l \le 2kg$ (see estimate (\ref{sides}) above).
But since  $m_1+...+m_l \le 2kg$ we have $d \le 4^{2kg}$.

Putting the estimates for $a,b,c,d$ together we prove the lemma.

\begin{remark} If we are given a tree on a surface $S$, along with $2g$ edges connecting the vertices of
the tree (and satisfying the hypothesis of Lemma \ref{lemma-h})  and a map of the resulting graph into $\M$,
the we can determine the map of $S$ into $\M$, up to homotopy. Thus we need only bound $|\Tr'(k,g)|$, where 
$\Tr'(k,g)$ is the set of trees of size at most $kg$, with $2g$ more edges added; we observe that $|\Tr'(k,g)|<ab$.
\end{remark}

\section{Quasifuchsian representations of surface groups}

\subsection{Generalized pants decomposition and the Complex Fenchel-Nielsen coordinates}

For background on complex Fenchel-Nielsen coordinates see \cite{series}, \cite{kour}, \cite{ser-tan}, \cite{kahn-markovic-1}. The exposition and notation we use here is in line with Section 2 in 
\cite{kahn-markovic-1}.

Let $X$  a compact topological surface (possibly with boundary)  and let $\rho:\pi_1(X) \to \PSLC$ be a representation (a homomorphism). 
We say that $\rho$ is a $K$-quasifuchsian representation if the group $\rho(\pi_1(X))$ is $K$-quasifuchsian, in which case we can 
equip $X$ with a complex structure  $X=\Ha /F$, for some Fuchsian group $F$,  such that   $f_*=\rho \circ \iota$. Here $\iota: F \to \pi_1(X)$ is an isomorphism, and  
$f_*:F \to fFf^{-1}$ is the conjugation homomorphism, induced by an equivariant $K$-quasiconformal map  $f: \partial{\Ho} \to \partial{\Ho}$.

We will also say that a quasisymmetric map $f: \partial{\Ha} \to \partial{\Ho}$ is $K$-quasiconformal if it has a $K$-quasiconformal extension to $\partial{\Ho}$.

By $\Pi$ we denote a topological pair of pants with cuffs $C_i$, $i=1,2,3$. Recall that that to every representation  $\rho:\pi_1(\Pi) \to \PSLC$, we associate the three half lengths 
$\hl(C_i) \in \C_{+} / 2i\pi \Z$, where $\C_{+}= \{ z \in \C: \, \RE(z)>0\}$. If $\rho$ is quasifuchsian then it is uniquely determined by the half lengths. 
The conjugacy class $[\rho]$  of a quasifuchsian representation $\rho$ is called a skew pair of pants.

We let $\Pi$ and $\Pi'$ denote two pairs of pants and  let $\rho:\pi_1(\Pi) \to \PSLC$ and   $\rho':\pi_1(\Pi') \to \PSLC$ denote two representations. 
Suppose that for some $c_1 \in \pi_1(\Pi)$ and $c'_1 \in \pi_1(\Pi')$, that belong to the  conjugacy classes of $C_1$ and $C'_1$ respectively, we have  $\rho(c_1)=\rho'(c'_1)$, and  $\hl(C_1)=\hl(C'_1)$. 
By  $s(C) \in \C/ (\hl(C) \Z+ 2\pi i \Z)$ we denote the  reduced twist-bend parameter, which measures how the two skew pairs of pants $[\rho]$ and $[\rho']$ align together along the axis of the loxodromic transformation $\rho(c_1)=\rho'(c'_1)$.

A pair $(\wt{\Pi}, \chi)$ is a generalized pair of pants if $\wt{\Pi}$ is a compact surface with boundary and $\chi$ is a 
finite degree covering map $\chi:\wt{\Pi} \to \Pi$, where $\Pi$ is a pair of pants. (We will also call $\wt{\Pi}$ a generalized pair of pants if $\chi$ is understood.)  
By $\chi_{*}:\pi_1(\wt{\Pi}) \to \pi_1(\Pi)$ we denote an induced homomorphism.

\begin{definition}\label{def-1} Let  $(\wt{\Pi}, \chi)$ be a generalized pair of pants and 
$$
\wt{\rho}:\pi_1(\wt{\Pi}) \to \PSLC,
$$
be a representation. We say that $\wt{\rho}$ is admissible with respect to $\chi$ if it factors through $\chi_*$, that is there exists $\rho:\pi_1(\Pi) \to \PSLC$ such that $\wt{\rho}=\rho \circ \chi_{*}$.  
\end{definition}

Let $\wt{C}_j$, $j=1,...,k$, denote the cuffs (the boundary curves) of the surface $\wt{\Pi}$, and let $C_1,C_2,C_3$ continue to denote the cuffs of $\Pi$. 
Then  $\chi$ maps each $\wt{C}_j$ onto some  $C_i$ with some degree $m_j \in \N$. We say that such a curve $\wt{C}_j$ is a $degree$ $m_j$ curve.
For every admissible $\wt{\rho}$ we define the half length $\hl(\wt{C}_j)$ as
$\hl(\wt{C}_j)=\hl(C_i)$. Let $\wt{c_j} \in \pi_1(\wt{\Pi}^{0})$ be in the conjugacy class that corresponds to the cuff $\wt{C}_j$. Then 
$$
\len(\wt{\rho}(c_i))=2m_j\hl(C_i) \,\,  (\text{mod}(2\pi i \Z)).
$$

Let $S$ be an oriented closed topological surface with a generalized pants decomposition. By this we mean  that we are 
given   a collection $\Col$  of disjoint  simple closed curves on $S$, such that  
for every component $\wt{\Pi}$ of  $S \setminus \Col$ there  is an associated  finite cover $\chi:\wt{\Pi} \to \Pi$.  Let 
$$
\wt{\rho}:\pi_1(S) \to \PSLC
$$
be a representation. We make the following assumptions on $\rho$:

\begin{itemize}

\item Given a curve $C \in \Col$ there exists two (not necessarily different) generalized pairs of pants $\wt{\Pi}_1$ and $\wt{\Pi}_2$ that both contain 
$C$ as a cuff, and that lie on different sides of $C$. Let $\chi_1:\wt{\Pi}_1 \to \Pi_1$ and $\chi_2:\wt{\Pi}_2 \to \Pi_2$ be the corresponding finite covers, where $\Pi_1$ and $\Pi_2$ are two pairs of pants. We  assume that the restrictions of $\chi_1$ and $\chi_2$ on the curve $C$ are of the same degree. 

\item  For every generalized pair of pants $\wt{\Pi}$ from the above decomposition of $S$, the restriction  $\rho:\pi_1(\wt{\Pi}) \to \PSLC$ is admissible with respect to the covering map 
$\chi:\wt{\Pi} \to \Pi$  (in the sense of Definition \ref{def-1}). 

\item For  every  $C \in \Col$, the half lengths of $C$ coming from the representations  $\rho:\pi_1(\wt{\Pi}_1) \to \PSLC$ and $\rho:\pi_1(\wt{\Pi}_2) \to \PSLC$ are one and the same.
 
\end{itemize}

Continuing with the above notation, let $C_i \subset \Pi_i$ denote the cuff such that $\chi_i(C)=C_i$.  
Let $\rho_i:\pi_1(\Pi_i) \to \PSLC$, $i=1,2$, be the representations such that the restriction of $\rho$ to $\pi_1(\wt{\Pi}_i)$ is equal to $\rho_i \circ (\chi_i)_{*}$. 
We define the reduced twist bend parameter $s(C)$ associated to $\rho$  to be equal to the  reduced twist-bend parameter for the representations $\rho_1$ and $\rho_2$.

So given a closed surface $S$ with a generalized pants decomposition $\Col$, and a representation $\rho:\pi_1(S) \to \PSLC$, we have defined the 
parameters  $\hl(C) \in \C_{+} / 2k\pi \Z$ and $s(C) \in \C/ (\hl(C) \Z+ 2\pi i \Z)$. 
The collection of pairs $(\hl(C),s(C))$, $C \in \Col$, is called the reduced Fenchel-Nielsen coordinates.
We observe that a representation $\rho:\pi_1(S) \to \PSLC$ is Fuchsian if and only if all the coordinates  $(\hl(C),s(C))$ are real.

The following elementary proposition (see \cite{kahn-markovic-1}) states that although a representation $\rho:\pi_1(S) \to \PSLC$ 
is not uniquely determined by its reduced Fenchel-Nielsen coordinates, it can be in a unique way  embedded in a
holomorphic family of representations.

\begin{proposition}\label{prop-rep} 
Fix a closed topological surface $S$ with  a generalized pants decomposition $\Col$. 
Let $z \in \C^{\Col}_{+}$ and $w \in \C^{\Col}$ denote complex parameters.
Then there exists a holomorphic (in $(z,w)$)  family of representations 
$$
\rho_{z,w}:\pi_1(S) \to \PSLC,
$$
such that  $\hl(C)=z(C)$, (mod$(2\pi i \Z)$) and  $s(C)=w(C)$, (mod$(\hl(C) \Z+ 2\pi i \Z)$). Moreover, for any 
$(z_0,w_0) \in \C^{\Col}_{+} \times \C^{\Col}$, the family of  representations $\rho_{z,w}$ is uniquely determined by  
the representation $\rho_{z_{0},w_{0}}$.  
\end{proposition}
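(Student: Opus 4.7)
The plan is to construct $\rho_{z,w}$ by first building representations on each generalized pair of pants and then gluing them together across the cuffs of $\Col$ using the reduced twist-bend parameters. For each base pair of pants $\Pi$ in the decomposition, a quasifuchsian representation into $\PSLC$ is determined up to conjugation by its three cuff half-lengths, so there is a canonical holomorphic family $\sigma_{z}\colon \pi_1(\Pi) \to \PSLC$ realizing any prescribed triple $z$. I would normalize, for example, by sending the attracting fixed point of $\sigma_{z}(c_1)$ to $\infty$ and the repelling fixed point to $0$, and then placing the fixed point set of $\sigma_{z}(c_2)$ in a canonical position; the matrix entries of $\sigma_{z}$ then become explicit holomorphic functions of $z$ via standard pants trace identities. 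I would then pull this family back along $\chi_{*}$ to obtain an admissible, holomorphic family $\sigma_{z}\circ \chi_{*}$ on each generalized pair of pants $\wt{\Pi}$.

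Next, I would assemble these local families into a representation of $\pi_1(S)$. Fix a spanning tree $T$ in the dual graph whose vertices are the components of $S\setminus \Col$ and whose edges are the curves of $\Col$. Along each edge $C$ of $T$ bounding two generalized pants $\wt{\Pi}_1,\wt{\Pi}_2$, the two cuff generators are mapped by the local representations to loxodromics with equal half-length, hence with $\PSLC$-conjugate images. I would holomorphically conjugate the representation on $\wt{\Pi}_2$ so that its cuff generator agrees with that of $\wt{\Pi}_1$, and then post-compose with the unique element of the centralizer of $\rho(c)$ (a loxodromic sharing the axis of $\rho(c)$) whose translation length and rotation realize the reduced twist-bend $w(C)$. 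For the edges $C$ outside $T$, the same data determines the value of $\rho_{z,w}$ on the element of $\pi_1(S)$ dual to $C$ in the van Kampen presentation. Since the covering degrees of $\chi_1$ and $\chi_2$ on $C$ agree by assumption, the identifications are consistent across each cuff, and the resulting $\rho_{z,w}\colon \pi_1(S)\to \PSLC$ is a bona fide homomorphism depending holomorphically on $(z,w)$.

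For the uniqueness assertion, suppose $\rho'_{z,w}$ is another holomorphic family with the same reduced Fenchel-Nielsen coordinates and $\rho'_{z_0,w_0}=\rho_{z_0,w_0}$. A consequence of the construction above is that $(\hl(C),s(C))$ are local holomorphic coordinates on the character variety of $\pi_1(S)$ in $\PSLC$ near $\rho_{z_0,w_0}$ (the complex extension of the classical Fenchel-Nielsen fact). Thus the two families differ locally by a holomorphic conjugation $A(z,w)\in \PSLC$; the normalization at $(z_0,w_0)$ together with the irreducibility of a quasifuchsian representation force $A\equiv \id$, and analytic continuation extends the equality $\rho'_{z,w}=\rho_{z,w}$ to all of $\C_{+}^{\Col}\times \C^{\Col}$.

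The main technical obstacle will be the bookkeeping in the gluing step: the parameter $s(C)$ naturally lives in $\C/(\hl(C)\Z+2\pi i\Z)$, whereas the conjugating centralizer element must be chosen as a single-valued holomorphic function of $(z,w)$. I will need to verify that different lifts of $s(C)$ produce the same homomorphism on $\pi_1(S)$, and that the cuff-degree factors $m_j$ arising in the generalized pants are absorbed consistently, so that $\rho_{z,w}$ is genuinely well-defined as a representation and not merely determined up to a residual ambiguity that would spoil the holomorphicity or the uniqueness claim.
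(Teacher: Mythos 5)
The paper does not prove this proposition; it cites \cite{kahn-markovic-1} and takes it as known. So there is no internal proof to compare against, but your reconstruction is essentially the standard complex Fenchel--Nielsen amalgamation argument that the cited paper uses: build a normalized holomorphic family on each base pair of pants, pull back along $\chi_{*}$ to get admissible families on the generalized pants, and amalgamate across the cuffs using a spanning tree of the dual graph and centralizer elements of the cuff images. That structure is correct and is the right route.

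Two points deserve correction. First, your closing worry is slightly inverted: you should \emph{not} expect different lifts of $s(C)$ to $\C$ to produce the same homomorphism. Shifting $w(C)$ by an element of the lattice $\hl(C)\Z + 2\pi i\Z$ generically changes $\rho_{z,w}$ by (pre)composition with a Dehn twist about $C$, which is a genuinely different marked representation. This is precisely why the proposition carries the ``Moreover'' clause: the reduced Fenchel--Nielsen coordinates alone only determine the family up to this discrete ambiguity, and the role of $(z,w) \in \C_{+}^{\Col}\times \C^{\Col}$ and the normalization at $(z_0,w_0)$ is exactly to resolve it. What you do get for free is the single-valuedness of the centralizer element as a holomorphic function of $(z,w)$, because $\C_{+}^{\Col}\times \C^{\Col}$ is simply connected, so a branch can be chosen globally once fixed at the basepoint. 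Second, the uniqueness paragraph has a gap: knowing that $\rho'_{z,w} = A(z,w)\,\rho_{z,w}\,A(z,w)^{-1}$ for a holomorphic $A$ with $A(z_0,w_0)=\mathrm{id}$ does not by itself force $A\equiv\mathrm{id}$; a nonconstant holomorphic family of conjugators could still vanish to the identity at one point. To close this you need to use the construction itself (the normalization built into the gluing pins down $\rho_{z,w}$, not merely its conjugacy class), or argue that the pants-by-pants normalization together with holomorphicity leaves no continuous conjugation freedom once the value at $(z_0,w_0)$ is fixed.
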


The  representation $\rho_{z,w}$ is Fuchsian if and only if both $z$ and $w$ are real, that is  $z \in \R^{\Col}_{+}$ and $w \in \R^{\Col}$. In this case the group $\rho_{z,w}(\pi_1(S))$ is of course discrete. Moreover, in \cite{kour} it has been proved that all quasifuchsian representations (up to conjugation in $\PSLC$) of $\pi_1(S)$ correspond to some neighborhood of the set $\R^{\Col}_{+}$ and $\R^{\Col}$ But in general, little is known for which choice of parameters $z,w$ the group $\rho_{z,w}(\pi_1(S))$ will be discrete. In the next subsection we prove the following  result in this direction.
Start with a nearly Fuchsian group $G<\PSLC$. We obtain a new group $G_1<\PSLC$ from $G$ by bending (by some definite angles) along some sparse equivariant collection of geodesics whose endpoints are in the limit set of $G$. Then  the new group $G_1$ is also quasifuchsian (although it is not nearly Fuchsian anymore).

\subsection{Small deformations of a sparsely bent pleated surface}

We let $S$ continue to denote  a closed surface with a generalized pants decomposition $\Col$, and we fix a holomorphic  family of representations 
$\rho_{z,w}$ as in Proposition \ref{prop-rep}. We set $G(z,w)=\rho_{z,w}(\pi_1(S))$.

Let $\Col_0 \subset \Col$ denote a sub-collection of curves. For  $z \in \R^{\Col}_{+}$ and $w \in \R^{\Col}$,
we let $S_{z,w}$ denote the Riemann surface isomorphic to  $\Ha / G(z,w)$, and on $S_{z,w}$ we identify the curves from $\Col$ with the corresponding  geodesics representatives. 
By $\cl(S_{z,w})$ we denote the largest number so that  the collection of collars (of width $\cl(S_{z,w})$) around the curves from $\Col_0$ is disjoint on  $S_{z,w}$.
For each $C \in \Col_0$, we choose   a number $-{{3}\over{4}}\pi < \theta_C<{{3}\over{4}} \pi$ (for each curve $C \in (\Col \setminus \Col_0)$ we set $\theta_C=0$).

The purpose of this subsection is to prove the following theorem. 

\begin{theorem}\label{thm-sparsely} There exist constants $K>1$ and $C>0$ such that the following holds. Let  $z_0 \in \R^{\Col}_{+}$ and $w_0 \in \R^{\Col}$, and $z_1 \in \C^{\Col}_{+}$ and $w_1 \in \C^{\Col}$ be such that the representation  $\rho=\rho_{z_{1},w_{1}} \circ \rho^{-1}_{z_{0},w_{0}}:G(z_0,w_0) \to G(z_1,w_1)$, is $K$-quasifuchsian. Set $z_2=z_1$ and $w_2=w_1+i\theta_C$. If $\cl(S_{z_{0},w_{0}}) \ge C$, then  the representation  $\rho_{z_{2},w_{2}}:\pi_1(S) \to \PSLC$ is $K_1$-quasifuchsian, where $K_1$ depends only on $K$ and $C$.

\end{theorem}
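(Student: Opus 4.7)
The plan is to construct an explicit equivariant quasisymmetric map $F\from \partial\Ho\to\partial\Ho$ conjugating $G(z_1,w_1)$ to $G(z_2,w_2)$ with dilatation bounded in terms of $K$ and $C$; composing with the $K$-quasiconformal map that conjugates the Fuchsian $G(z_0,w_0)$ to $G(z_1,w_1)$ will then produce the required quasiconformal conjugation of $G(z_0,w_0)$ to $G(z_2,w_2)$, proving $\rho_{z_2,w_2}$ is $K_1$-quasifuchsian.

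First I would realize $\rho_{z_1,w_1}$ as the holonomy of a pleated map $f_1\from\Ha\to\Ho$ whose pleating locus is contained in the $\pi_1(S)$-orbit of the geodesic representatives of $\Col_0$ and whose bending angles are all zero; equivalently, $f_1$ isometrically glues together the totally geodesic realizations of the components of $S\setminus\Col_0$ coming from the restrictions of $\rho_{z_1,w_1}$. Because $\rho_{z_1,w_1}\circ\rho_{z_0,w_0}^{-1}$ is $K$-quasiconformal, the pleated surface $f_1(\Ha)$ is bilipschitz-equivalent, with constant depending only on $K$, to the Fuchsian disc for $G(z_0,w_0)$ on neighborhoods of the pleating geodesics. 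In particular, each lift of a curve $C\in\Col_0$ admits an embedded collar in $f_1(\Ha)$ whose width is bounded below by a constant $C'=C'(K,C)$, and these collars are pairwise disjoint.

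Next I would produce $f_2\from\Ha\to\Ho$ by bending $f_1$ along each lift of each $C\in\Col_0$ by the prescribed angle $\theta_C$; since the complex twist-bend coordinates encode exactly this operation, the resulting pleated map is equivariant with respect to $\rho_{z_2,w_2}$. To obtain $F$, I would work on $\C=\partial\Ho$ and define $F$ piecewise: in a wedge-shaped neighborhood of each bending geodesic $f_1(\wt\gamma)$, with opening angle chosen proportionally to the collar width $C'$, apply the Möbius rotation by $\theta_C$ about that geodesic, and extend by the identity outside the wedges. The hypothesis $|\theta_C|<\tfrac{3}{4}\pi$ together with the wedge width controlled by $C'$ makes each local rotation quasiconformal with dilatation bounded by some $K''=K''(K,C)$; because the wedges can be chosen pairwise disjoint (using the collar-disjointness from the previous step), the Beltrami coefficients do not compound, and the globally assembled $F$ is $K''$-quasiconformal.

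The main obstacle is carrying out the wedge construction equivariantly and consistently: one must choose, for each lift of each $C\in\Col_0$, a wedge whose width is uniformly bounded below, whose interior is disjoint from all other wedges in the $\pi_1(S)$-orbit, and on whose boundary rays the rotation by $\theta_C$ matches continuously with the identity. This is where the sparseness hypothesis $\cl(S_{z_0,w_0})\ge C$ enters essentially: it propagates through the $K$-quasiconformal equivalence to a uniform spacing of the bending geodesics in the universal cover of $f_1(\Ha)$, which in turn allows the wedges to be shrunk, if necessary, to enforce disjointness while keeping their widths bounded below by a constant depending only on $K$ and $C$. Once the wedge construction is in place, checking that $F$ conjugates $G(z_1,w_1)$ to $G(z_2,w_2)$ is a matter of matching the bending cocycle against the change $w_2-w_1=i\theta_C$ in the twist-bend parameters, and the bound $K_1=K\cdot K''$ follows from composing the two quasiconformal conjugations.
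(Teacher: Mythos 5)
The high-level strategy — produce an equivariant ``bend by $\theta_C$ along $\Col_0$'' map and bound its distortion in terms of $K$ and the collar width — is the same as the paper's, but your implementation has genuine problems, and it misses the technical ingredient the paper uses to make the estimate work.

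The first difficulty is that the map you describe is not even continuous, let alone equivariant. You say: in each wedge apply the M\"obius rotation $R(\theta_C)$ about the bending geodesic, and extend by the identity outside the wedges. But $R(\theta_C)$ is not the identity on the boundary rays of the wedge, so this map has jump discontinuities there. What you presumably want is to interpolate the rotation angle from $0$ to $\theta_C$ across a wedge (a shear); but if you do that, then outside the wedge the map is not the identity — it is the fully accumulated rotation $R(\theta_C)$, and as soon as you cross another wedge the rotations start to compound. This is unavoidable: equivariance forces the picture in which, far from any single bending geodesic, the map has been post-composed with the product of all the rotations accumulated along the way. That is exactly the inductive construction in the paper's proof ($\wt g = \wt f$ on $\wt M$, then $\wt g = R(\theta_C)\circ\wt f$ on the adjacent component, and so on). The claim that ``the Beltrami coefficients do not compound'' because the wedges are disjoint elides the real issue, which is not local overlap of dilatations but global accumulation of the rotations.

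The second, deeper gap is that you never actually prove the resulting map is quasiconformal (or quasi-isometric). Once the rotations compound, proving that the global bent map is a quasi-isometry is the hard step, and it is precisely the content of Lemma~\ref{lemma-elem} and Lemma~\ref{lemma-elem-1}: a piecewise nearly-geodesic map with bending angles uniformly below $\pi$ and segments uniformly long is globally bilipschitz. This is where the hypotheses $|\theta_C|<\tfrac34\pi$ and $\cl(S_{z_0,w_0})\ge C$ do their work, and it requires an argument, not just a disjointness observation. Related to this, your starting point — realizing $\rho_{z_1,w_1}$ as the holonomy of a pleated map whose pieces are ``totally geodesic realizations of the components of $S\setminus\Col_0$'' with zero bending — is also not available in general: $z_1,w_1$ are arbitrary complex parameters, so $\rho_{z_1,w_1}$ need have no such pleated realization. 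The paper avoids this entirely by taking the Douady--Earle extension $\wt f$ of the given $K$-quasiconformal boundary map: this is a smooth, $\delta(K)$-nearly-geodesic diffeomorphism (with $\delta(K)\to0$ as $K\to1$) that can be adjusted to be conformal along the lifts of $\Col_0$, and that structure is what feeds cleanly into Lemma~\ref{lemma-elem-1}. Without a substitute for both the nearly-geodesic extension and the bilipschitz criterion, the argument does not close.
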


The following lemma is elementary. 

\begin{lemma}\label{lemma-elem} Let $0 \le \theta_0< \pi$ and $B_0 \ge 1$. There exist constants $L(\theta_0,B_0)>0$ and $C(\theta_0,B_0)>0$  such that the following holds. Let $I \subset \R$ be an interval that is partitioned into intervals $I_j$, $j=1,...,k$. Let $\psi:I \to \Ho$  be  a continuous map, such that $\psi$ maps each $I_j$ onto a geodesic segment and the  restriction of $\psi$ on $I_j$ is $B_0$-bilipschitz. Assume in addition that the bending angle between two consecutive geodesic intervals $\psi(I_j)$ and $\psi(I_{j+1})$  is at most $\theta_0$. If the length of every $I_j$ is at least $C(\theta_0,B_0)$ then $\psi$ is $L(\theta_0,B_0)$-bilipschitz.
\end{lemma}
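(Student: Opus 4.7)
The plan is to reduce the statement to a piecewise-geodesic estimate in $\Ho$ and then iterate over vertices using hyperbolic triangle comparison. The upper Lipschitz bound $d(\psi(s),\psi(t)) \le B_0|s-t|$ is immediate from the per-segment $B_0$-bilipschitz assumption: the image path has hyperbolic length at most $B_0|s-t|$, and distance is dominated by length. The content of the lemma is the lower bound, for which it suffices (since parameter length and image arc length differ by a factor of $B_0$) to show that the piecewise-geodesic image $\gamma = \psi(I) \subset \Ho$ is a quasi-isometric embedding with constants depending only on $\theta_0$ and on the image edge-length lower bound $C/B_0$.

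Two triangle estimates in $\Ho$ drive the argument. First, the hyperbolic law of cosines gives, for $p,q,r \in \Ho$ making exterior angle $\theta \le \theta_0$ at $q$,
$$
\cosh d(p,r) \;=\; \cosh d(p,q)\cosh d(q,r)+\cos\theta\,\sinh d(p,q)\sinh d(q,r) \;\ge\; \cos^2(\theta_0/2)\cdot\cosh\bigl(d(p,q)+d(q,r)\bigr),
$$
yielding the three-point estimate
$$
d(p,r) \;\ge\; d(p,q)+d(q,r)-M(\theta_0).
$$
Second, the hyperbolic law of sines bounds the interior angle at $r$ in the same triangle by $\sin\angle_r \le \sin\theta_0 \cdot e^{-(d(q,r)-M(\theta_0))}$, which is arbitrarily small when $d(q,r)$ is large.

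Now iterate along the piecewise geodesic with vertices $p_0,\dots,p_N$, image edge lengths $\ell_k \ge C/B_0$, and original bending angles $\theta_k \le \theta_0$. Define the \emph{effective bending} $\alpha_k$ at $p_k$ to be the exterior angle between the geodesic segment $[p_0,p_k]$ and the original edge $[p_k,p_{k+1}]$. The spherical triangle inequality on the unit tangent sphere at $p_k$ gives $\alpha_k \le \theta_k+\angle_{p_k} \le \theta_0+\angle_{p_k}$, where $\angle_{p_k}$ is the angle at $p_k$ in the triangle $p_0 p_{k-1} p_k$. Applying the sine-law estimate to this triangle (with interior angle $\pi-\alpha_{k-1}$ at $p_{k-1}$) gives $\sin\angle_{p_k} \le \sin\alpha_{k-1} \cdot e^{-(C/B_0-M(\theta^*))}$ whenever $\alpha_{k-1} \le \theta^* < \pi$. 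Fixing $\theta^* := (\theta_0+\pi)/2$ and choosing $C$ large enough in terms of $\theta_0$ and $B_0$ so that $\arcsin(e^{-(C/B_0-M(\theta^*))}) < \theta^*-\theta_0$ closes the induction: $\alpha_k \le \theta^*$ for all $k$. The three-point estimate, applied repeatedly with angle bound $\theta^*$, then iterates to
$$
d(p_0,p_N) \;\ge\; \sum_{k=1}^N \ell_k - N\cdot M(\theta^*) \;\ge\; \bigl(1-B_0M(\theta^*)/C\bigr)\sum_{k=1}^N \ell_k,
$$
which is a positive fraction of the total arc length. Interpolation over non-vertex points using the per-segment bilipschitz assumption then yields the desired lower Lipschitz bound, completing the proof.

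The delicate point is the uniform-in-$k$ preservation of the effective bending bound $\alpha_k \le \theta^*$. What makes it work cleanly is that $\sin\angle_{p_k}$ depends only on $\sin\alpha_{k-1}$, which is bounded by $1$ regardless of the history of the iteration; so the angle correction at each vertex does not compound, and a single large choice of $C$ handles all iterations at once.
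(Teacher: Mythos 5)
The paper states this lemma without proof (it is labeled ``elementary''), so there is no official argument to compare against; your proof is a reasonable one and the main ideas are sound. The two triangle estimates---the three-point subadditivity $d(p,r)\ge d(p,q)+d(q,r)-M(\theta_0)$ from the law of cosines, and the exponential decay of the far angle from the law of sines---combined with the induction on the effective bending $\alpha_k$ are exactly the right ingredients, and your closing observation (that $\sin\angle_{p_k}$ is controlled by $e^{-(C/B_0-M(\theta^*))}$ uniformly, since $\sin\alpha_{k-1}\le 1$, so the angle correction never compounds) is what makes a single choice of $C$ work for all $k$. The algebra also checks: $\cosh a\cosh b+\cos\theta\sinh a\sinh b=\cos^2(\theta/2)\cosh(a+b)+\sin^2(\theta/2)\cosh(a-b)\ge\cos^2(\theta_0/2)\cosh(a+b)$, and $\sinh x/\sinh(x+y)<e^{-y}$ for $x,y>0$.

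Two points should be made explicit. First, passing from $\sin\angle_{p_k}\le\varepsilon$ to $\angle_{p_k}\le\arcsin\varepsilon$ requires excluding the root near $\pi$; this follows from the angle-sum bound $\angle_{p_k}<\pi-(\pi-\alpha_{k-1})=\alpha_{k-1}\le\theta^*=\pi-(\theta^*-\theta_0)$, but it is silently used. Second, the final ``interpolation over non-vertex points'' is where a genuine (if small) gap remains. If $s,t$ lie in the same $I_j$, the per-segment bound suffices; if several full edges separate them, your telescoping applied to the path $\psi(s),p_{j_1},\dots,p_{j_2-1},\psi(t)$ works, since the short initial partial edge costs nothing in the induction and the short final one costs only one extra $M(\theta^*)$. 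But when $s,t$ lie in \emph{adjacent} segments with both partial lengths small, the estimate $d\ge \ell_1+\ell_2-M(\theta_0)$ can be vacuous. You need the separate scale-uniform estimate that for a two-segment geodesic path in $\Ho$ with exterior angle $\le\theta_0<\pi$ one has $d(p,r)\ge\lambda(\theta_0)\bigl(d(p,q)+d(q,r)\bigr)$ for some $\lambda(\theta_0)>0$ (the ratio $d(p,r)/(d(p,q)+d(q,r))$ tends to $\cos(\theta_0/2)$ at the origin, to $1$ at infinity, and is continuous and positive in between, hence has a positive infimum). With that supplied, the argument is complete.
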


Let $\psi:I \to \Ho$ be a $C^{1}$ map, where $I \subset \R$ is a closed interval. For $x \in I$ let $v(x) \in \TB I$ denote the unit vector that points toward $+\infty$. Let $\delta>0$. 
We say that the map $\psi$ is $\delta$-nearly geodesic if for every $x,y \in I$ such that $x <y \le x+ 1$, we have that the angle between the vector $\psi_{*}(v(x))$ and the oriented geodesic segment from $\psi(x)$ to $\psi(y)$ is at most $\delta$. 

Clearly, every $0$-nearly geodesic map is an isometry, and a sequence of $\delta_n$-nearly geodesic maps converges (uniformly on compact sets) in the $C^1$ sense  to an isometry, when $\delta_n \to 0$.  
The following lemma is a generalization of the previous one.

\begin{lemma}\label{lemma-elem-1}  There exist universal  constants $L,C, \delta>0$,  such that the following holds. Suppose that $I$  is partitioned into intervals $I_j$, $j=1,...,k$, and let $\psi:I \to \Ho$ be a continuous  map, whose restriction on every closed sub-interval $I_j$ is $C^{1}$ and  $\delta$-nearly geodesic. Assume that the bending angle between two consecutive curves $\psi(I_{j})$ and $\psi(I_{j+1})$ is at most ${{3}\over{4}}$ (by the bending angle between two $C^{1}$ curves we mean the appropriate angle determined by the two tangent vectors at the point where the two curves meet).  
If the length of every $I_j$ is at least $C$ then $\psi$ is $L$-bilipschitz.
\end{lemma}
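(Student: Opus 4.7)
The plan is to reduce Lemma \ref{lemma-elem-1} to Lemma \ref{lemma-elem} by replacing each $\delta$-nearly geodesic piece of $\psi$ with a piecewise geodesic approximant on a finer partition and tracking the error. The first step is a quantitative local claim: for every $\varepsilon>0$ there is $\delta_{0}(\varepsilon)>0$ such that any $\delta$-nearly geodesic map defined on an interval of length at most $1$, with $\delta\le\delta_{0}$, is $(1+\varepsilon)$-bilipschitz and has image within Hausdorff distance $\varepsilon$ of the geodesic segment between its endpoints. This is the quantitative form of the $C^{1}$-to-isometry convergence noted just before the statement, and follows by a routine compactness argument together with the fact that the geodesic direction at a fixed basepoint toward a variable endpoint in $\Ho$ depends continuously on the endpoint.

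Given this, I would subdivide each $I_{j}$ into consecutive sub-intervals $J^{j}_{1},\dots,J^{j}_{n_{j}}$ of length in $[\tfrac{1}{2},1]$ (possible once the constant $C$ in the lemma is taken to be at least $\tfrac{1}{2}$) and define a piecewise geodesic map $\wt{\psi}\from I\to \Ho$ which on each $J^{j}_{k}$ is the constant-speed parametrization of the geodesic segment joining the $\psi$-images of its endpoints. By the first step, $\wt{\psi}$ is $(1+\varepsilon)$-bilipschitz on each $J^{j}_{k}$ and pointwise $\varepsilon$-close to $\psi$ on each $J^{j}_{k}$. The bending angles of $\wt{\psi}$ at interior vertices of $I_{j}$ are $O(\varepsilon)$, since the incoming and outgoing geodesic directions are each $O(\varepsilon)$-close to the common unit tangent of $\psi$ at the vertex; and the bending angles of $\wt{\psi}$ at the original vertices between $I_{j}$ and $I_{j+1}$ differ from the original ones by $O(\varepsilon)$, so remain bounded by some $\theta_{0}<\pi$ once $\varepsilon$ is small enough.

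Now fix such a $\theta_{0}$ together with $B_{0}=1+\varepsilon$, and choose $\varepsilon$ small enough that $C(\theta_{0},B_{0})\le \tfrac{1}{2}$. Lemma \ref{lemma-elem} then applies to $\wt{\psi}$ and yields the bilipschitz constant $L(\theta_{0},B_{0})$. Since $\psi$ coincides with $\wt{\psi}$ at every sub-partition endpoint and is pointwise $\varepsilon$-close to $\wt{\psi}$ in between, a triangle-inequality argument (short distances controlled by the bilipschitz bound on the single sub-interval containing both points, long distances by the bilipschitz bound on $\wt{\psi}$ plus the $\varepsilon$-error) yields a universal bilipschitz constant $L$ for $\psi$, with universal $C$ and $\delta$. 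The main obstacle is the first step: making the $C^{1}$-compactness assertion genuinely quantitative and uniform over all $\delta$-nearly geodesic maps on intervals of length up to $1$; once that is in hand the rest is routine bookkeeping feeding into Lemma \ref{lemma-elem}.
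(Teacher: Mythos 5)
Your strategy is the same as the paper's: subdivide each $I_j$ into controlled-length pieces, replace $\psi$ by a piecewise-geodesic map $\wt{\psi}$ that interpolates at the new partition points, control the bending angles of $\wt{\psi}$ via the $\delta$-nearly geodesic hypothesis, apply Lemma \ref{lemma-elem}, and transfer the bilipschitz bound back to $\psi$. However there is a genuine gap in how you choose the subdivision length. You split each $I_j$ into pieces of length in $[\tfrac12,1]$ and then ask to ``choose $\varepsilon$ small enough that $C(\theta_0,B_0)\le\tfrac12$.'' But $C(\theta_0,B_0)$ is not a parameter you control: it is whatever constant Lemma \ref{lemma-elem} hands you for the chosen $\theta_0$ and $B_0$. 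Shrinking $\varepsilon$ only pushes $B_0=1+\varepsilon$ towards $1$, and nothing in Lemma \ref{lemma-elem} guarantees $C(\theta_0,1)\le\tfrac12$; a priori, for $\theta_0>\tfrac34$ the threshold length could be larger than $1$, and then your approximant $\wt{\psi}$ would fail the hypothesis of Lemma \ref{lemma-elem} (its pieces would be too short). The paper avoids this by subdividing each $I_j$ into pieces of length in $[C(\theta_0,B_0),\,2C(\theta_0,B_0)]$, which automatically satisfies the hypothesis of Lemma \ref{lemma-elem} and only requires the original pieces to have length at least $C(\theta_0,B_0)$ (which becomes the constant $C$ of the lemma).

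The fix is minor and keeps your structure: subdivide into pieces of length in $[C(\theta_0,B_0),2C(\theta_0,B_0)]$ rather than $[\tfrac12,1]$, and strengthen your first step to intervals of length up to $2C(\theta_0,B_0)$ rather than up to $1$. Since $2C(\theta_0,B_0)$ is a fixed finite constant once $\theta_0$ and $B_0$ are chosen, the quantitative compactness claim in your first step extends to this longer scale, for instance by chaining the length-$\le 1$ version a bounded number of times. With that adjustment your argument matches the paper's proof.
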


\begin{proof} Choose any two numbers ${{3}\over{4}}< \theta_0 <\pi$ and $B_0>1$. Assuming that $C > C(\theta_0,B_0)$  we can partition each $I_j$ into sub-intervals of length between $C(\theta_0,B_0)$ and $2C(\theta_0,B_0)$. Replacing each $I_j$ with these new intervals we obtain the new partition of $I$ into intervals $J_i$, where each $J_i$ has the length between $C(\theta_0,B_0)$ and  $2C(\theta_0,B_0)$. Let $\psi:I \to \Ho$ be the continuous map that agrees with $\psi$ at the endpoints of all intervals $J_i$, and such that the restriction of $\psi$ to each $J_i$ maps $J_i$ onto a geodesic segment in $\Ho$, and  is affine (the map $\psi$ either stretches or contracts distances by a constant factor on a given $J_i$).

Next, since we have the upper bound $2C(\theta_0,B_0)$ on the length of each interval $J_i$, we can choose $\delta>0$ small enough such that the bending angle between two consecutive geodesic segments $\phi(J_i)$ and $\phi(J_{i+1})$ is at most $\theta_0$. Also, by choosing $\delta$ small we can arrange that the map $\phi \circ \psi^{-1}$ is $2$-bilipschitz (the same statement holds if we  replace $2$ by any other number greater than $1$). By the previous lemma the map $\phi$ is 
$L(\theta_0,B_0)$-bilipschitz. Then the map $\psi$ is  $2L(\theta_0,B_0)$-bilipschitz. We take $L=2L(\theta_0,B_0)$, and $C=C(\theta_0,B_0)$, and the lemma is proved.
\end{proof}

We are now ready to prove Theorem \ref{thm-sparsely}.

\begin{proof}
Recall that $f:\partial{\Ha} \to \partial{\Ho}$ is a $K$-quasiconformal map that conjugates $G(z_0,w_0)$ to $G(z_1,w_1)$. Let  $\wt{f}:\Ha \to \Ho$ denote the  Douady-Earle extension  of  $f$. Then  $\wt{f}$ is $\delta$-nearly geodesic (this means that the restriction of $\wt{f}$ to every geodesic segment is  $\delta$-nearly geodesic in the sense of the above definition) for some $\delta=\delta(K)$, and $\delta(K) \to 0$, when $K \to 1$. 

If we assume that $\cl(S_{z_{0},w_{0}})$ is large enough, by adjusting $\wt{f}$, we can arrange that $\wt{f}$ is then $C^{\infty}$ mapping that  maps the geodesics in $\Ha$ that are lifts of the geodesics from $\Col_0$ onto the corresponding geodesics in $\Ho$,  and ensure that $\wt{f}$ is $2\delta$-nearly geodesic. Moreover, we can arrange  that $\wt{f}$ is conformal at every point of every  geodesic $\gamma$ that is a lift of a curve from $\Col_0$.

We construct the map $\wt{g}:\Ha \to \Ho$ that conjugates $G(z_0,w_0)$ to $G(z_{1},w_{1})$ as follows. Let $M$ be a component of the set $S_{z_{0},w_{0}} \setminus \Col_0$, and let $\wt{M} \subset \Ha$ denote its universal cover, that is $\wt{M}$ is an ideal polygon with infinitely many sides in $\Ha$, whose sides are lifts of the geodesics from $\Col_0$ that bound $M$. We set $\wt{g}=\wt{f}$ on $\wt{M}$.  

Let $\wt{M}_1 \subset \Ha$ be the universal cover of some other component $M_1$ of the set $S_{z_{0},w_{0}} \setminus \Col_0$. Let $\gamma$ denote a lift of a geodesic  $C \in \Col_0$, 
and assume that the polygons $\wt{M}$ and $\wt{M}_1$ are glued to each other along $\gamma$ (that is, $C$ is in the boundary of both $M$ and $M_1$). 
Let $R(\theta_C) \in \PSLC$, denote the rotation about $\wt{g}(\gamma)$ for the angle $\theta_C$.  We define  $\wt{g}$ on $\wt{M}_1$ by letting
$\wt{g}=R(\theta_C) \circ \wt{f}$.  We then define $\wt{g}$ inductively on the rest of $\Ha$.

Clearly $\wt{g}$ conjugates  $G(z_{0},w_{0})$ to $G(z,w)$. Let $x \in \gamma$, and  $v(x)$ a non-zero  vector that is orthogonal to $\gamma$. Since $|\theta_C| \le {{3}\over {4}} \pi$, and since $\wt{f}$ is differentiable at $x$, it follows that  the bending angle between the vectors $\wt{g}_{*}(v(x))$ and $\wt{g}_{*}(-v(x))$ is at most ${{3}\over{4}}\pi$. If $u(x)$ is any other vector at $x$, 
since $\wt{f}$ is conformal at $x$, it follows that the bending angle between the vectors $\wt{g}_{*}(u(x))$ and $\wt{g}_{*}(-u(x))$ is at most as big as the bending angle between the vectors $\wt{g}_{*}(v(x))$ and $\wt{g}_{*}(-v(x))$. Therefore, the restriction of the  map $\wt{g}$ on every geodesic segment satisfies the assumptions of  Lemma \ref{lemma-elem-1}. It follows that  $\wh{g}$ is 
$L$-bilipschitz, where $L$ depends only on $K$ and $C$. Therefore the representation $\rho_{z_{2},w_{2}}:\pi_1(S) \to \PSLC$ is $K_1$-quasifuchsian, where $K_1$ depends only on $K$ and $C$.

\end{proof}

\subsection{Convex hulls and pleated surfaces} In this subsection we digress from the notions of generalized pants decompositions and Fenchel-Nielsen coordinates, to prove a preliminary lemma about hyperbolic convex hulls of quasicircles.

Let $\lambda$ be a discrete geodesic lamination in $\Ha$, and let $\cl(\lambda)$ denote the largest number such that for every small $\epsilon>0$, the collection of collars (crescent in $\Ha$) of width $\cl(\lambda)-\epsilon$ around the leafs of $\lambda$ is disjoint in $\Ha$. Let $\mu$ denote a  real valued measure on $\lambda$. By $\iota_{\lambda,\mu}=\iota:\Ha \to \Ho$, we denote the corresponding pleating map.  As usual, by $\iota(\lambda)$ we denote the collection of geodesics in $\Ho$ that are images of geodesics from $\lambda$ under $\iota$. If the map $\iota$ is $L$-bilipschitz then $\iota$ extends continuously to a $K$-quasiconformal 
map $f:\partial{\Ha} \to \partial{\Ho}$, for some $K=K(L)$. In this case, let $W \subset \Ho$ denote the convex hull of the quasicircle $\iota(\partial{\Ha})$.
The convex hull $W$ has two boundary components which we denote by $\partial_1{W}$ and  $\partial_2{W}$.  We prove the following lemma.

\begin{lemma}\label{lemma-rigid} There  exist universal constants  $C_1, \delta_1>0$, with the following properties. Assume that $\cl(\lambda)>C_1$,  and that 
${{\pi}\over{4}} \le |\mu(l)| \le {{3\pi}\over{4}}$, for every $l \in \lambda$. Then for every geodesic  $\gamma \subset W$ the following holds:
\begin{enumerate}
\item If  $\gamma \in \iota(\lambda)$, then  for every point $p \in \gamma$, the inequality 
$$\max_{i=1,2} d(p,\partial_i{W})> \delta_1
$$ 
holds,  
\item If $\gamma$ does not belong to $\iota(\lambda)$, then for some point $p \in \gamma$, the inequality  $\max_{i=1,2} d(p,\partial_i{W})< {{\delta_1}\over{3}}$ holds.
\end{enumerate}

\end{lemma}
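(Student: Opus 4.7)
The plan is to analyze the local geometry of $W$ near each type of geodesic, exploiting the sparsity $\cl(\lambda)>C_1$ to reduce the picture to a single bending line (for part (1)) or a single flat piece (for part (2)). The constant $\delta_1$ will be chosen so that the local thickness of $W$ at a bending line exceeds $\delta_1$, while $W$ collapses to within $\delta_1/3$ of the pleated plane in the interior of complementary regions.

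For part (1), fix $\gamma=\iota(l)$ with $l\in\lambda$ and $\theta=|\mu(l)|\in[\pi/4,3\pi/4]$. Near $\gamma$ the pleated surface consists of two flat pieces $Q_1,Q_2$ lying in totally geodesic planes that meet along $\gamma$ with exterior dihedral angle $\theta$, and by $\cl(\lambda)>C_1$ each $Q_i$ extends at least hyperbolic distance $C_1$ perpendicular to $\gamma$. For any $p\in\gamma$, pick $q_i\in Q_i$ on the perpendicular to $\gamma$ at $p$ at hyperbolic distance $C_1/2$ from $p$. Since $\iota(\Ha)\subset W$ and $W$ is convex, the geodesic segment $[q_1,q_2]$ lies in $W$; a standard hyperbolic trigonometry computation in the perpendicular plane at $p$ shows that $[q_1,q_2]$ lies at hyperbolic distance at least $\delta:=\delta(C_1,\pi/4)>0$ from $p$ on the convex side of the bend. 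Sweeping $p$ along $\gamma$, these segments fill out a two-dimensional sheet inside $W$ at distance $\ge\delta$ from $\gamma$ on that side, so the boundary component $\partial_i W$ lying on that side is at hyperbolic distance at least $\delta$ from every point of $\gamma$. Choosing $\delta_1<\delta$ then yields $\max_{i}d(p,\partial_i W)>\delta_1$ for every $p\in\gamma$.

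For part (2), let $\gamma\subset W$ be a geodesic with $\gamma\notin\iota(\lambda)$. The endpoints of $\gamma$ lie on the quasicircle $\iota(\partial\Ha)$ and correspond, via the boundary extension of $\iota$, to endpoints of a geodesic $\gamma'\subset\Ha$ with $\gamma'\notin\lambda$. Either $\gamma'$ crosses a leaf of $\lambda$ transversely, or it lies entirely in a single complementary region of $\lambda$. In either case one can produce a point $q'\in\gamma'$ in the interior of a flat region, at hyperbolic distance at least $C_1/2$ from every leaf of $\lambda$. At $q=\iota(q')\in\Ho$, the pleated surface locally coincides with a totally geodesic plane $\Pi$. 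One argues that both boundary components $\partial_1 W,\partial_2 W$ lie within hyperbolic distance $\eta(C_1)$ of $\Pi$ near $q$, with $\eta(C_1)\to 0$ as $C_1\to\infty$: the nearest bending of $\iota(\lambda)$ is at distance $\ge C_1/2$, and the sandwiching $\iota(\Ha)\subset W$, together with hyperbolic convex-geometric estimates, forces the two envelopes of $W$ to track the flat piece $\Pi$ on the scale $C_1$. Since the geodesic $\gamma$ passes within $O(\eta(C_1))$ of $q$ (by the bilipschitz control on $\iota$ relating $\gamma'$ and $\gamma$), its nearest point $p$ to $q$ satisfies $\max_{i}d(p,\partial_i W)\le O(\eta(C_1))$, which is less than $\delta_1/3$ once $C_1$ is taken large enough.

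The main obstacle is the quantitative estimate of $\eta(C_1)$ in part (2), that is, showing that both boundary components $\partial_i W$ collapse onto the flat piece $\Pi$ at interior points of a complementary region. This is subtle because the bending angles on distant leaves are bounded away from zero ($\ge \pi/4$), so distant bending is not automatically negligible; what saves the argument is that the pleated surface is itself uniformly close to $\Pi$ over a hyperbolic ball of radius $\ge C_1/2$ around $q$, together with the exponential divergence of geodesics in $\Ho$ that prevents a single distant bending from pushing $\partial_i W$ significantly away from $\Pi$ at $q$. Balancing this decay against the lower bound in part (1) is what fixes the constants $C_1$ and $\delta_1$.
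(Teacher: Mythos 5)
Your proposal takes a genuinely different route from the paper. The paper does not attempt a direct construction at all: it proves both parts by a compactness/contradiction argument. One normalizes so that $p_n=p$ and $\gamma_n=\gamma$ are fixed, passes to a subsequence of the $L$-bilipschitz pleating maps $\iota_n$, and observes that as $\cl(\lambda_n)\to\infty$ the limit $\iota_\infty$ is either a pleating map with a single bending line of angle $\ge\pi/4$ (part 1) or an isometric embedding onto a geodesic plane (part 2). The convex hulls $W_n$ converge to $W_\infty$ on compact sets, the distances $d(p_n,\partial_i W_n)$ converge, and in the two limit models the conclusion is clear; $\delta_1$ is extracted once and for all from the single-bend model $P_0$. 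This avoids ever having to prove a quantitative "collapse onto the flat piece" estimate of the sort your $\eta(C_1)$ requires. Your approach, by contrast, aims for explicit constants via local geometry, which would be more informative if it worked, but both halves as written have real gaps.

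In part (1), the step from "the chord $[q_1,q_2]$ lies in $W$ at distance $\ge\delta$ from $p$" to "$d(p,\partial_i W)\ge\delta$ on the convex side" is not justified. A piece of $W$ being far from $p$ does not bound $d(p,\partial_i W)$ from below: the nearest point of $\partial_i W$ to $p$ need not lie in the perpendicular plane at $p$, and measuring distances within that plane only gives an inequality in the wrong direction ($d_{\Ho}(p,\partial_i W)\le d_{P}(p,\partial_i W\cap P)$). The sweeping remark does not close this, because the boundary could approach $p$ obliquely outside the swept prism. One would need a genuine convexity/support-plane argument here (e.g.\ showing that any support plane of $W$ within $\delta$ of $p$ would have to separate $p$ from one of $q_1,q_2$, which is impossible); as written this is asserted, not proved. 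In part (2) the gap is even clearer and you flag it yourself: the entire content of the statement is the estimate $\eta(C_1)\to0$ forcing both $\partial_i W$ to hug the flat plane $\Pi$ near the deep point $q$, and your text says "one argues that\dots" and then offers a heuristic about exponential divergence rather than a proof. Since bending angles are not small (they are bounded below by $\pi/4$), controlling the effect of distant bends on a convex-hull boundary is precisely the hard quantitative step; it is exactly what the paper's compactness argument is designed to sidestep. As it stands the proposal is a plausible roadmap but not a proof, and the two missing estimates are the crux, not routine fill-in.
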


Compare this lemma with Lemma 4.2 in \cite{masters}.

\begin{proof} It follows from Lemma \ref{lemma-elem} that for $C_1$ large enough, the pleating map $\iota$ is $L$-bilipschitz for some universal constant $L>1$. Observe that $\iota(\Ha) \subset W$.  Moreover, there is a constant $M_0>0$, that depends only on $L$, such that for every $p \in W$ we have  $d(p,\iota(\Ha))< M_0$

We choose $\delta_1>0$ as follows. Let $P_0$ be the pleated surface in $\Ho$ that has a single bending line $\gamma_0$, and 
with the bending angle equal to ${{\pi}\over{4}}$. Then $P_0$ is bounded by a quasicircle at $\partial{\Ho}$. 
Denote by $W_0$ the convex hull of this quasicircle and let $\partial_i(W_0)$, $i=1,2$, denote the two boundary components of $W_0$. 
Then there exists $\delta_1>0$  such that for every point $p \in \gamma_0$, we have $\max_{i=1,2} d(p,\partial_i{W_0})> 2\delta_1$. Observe
that $\gamma_0$ belongs to exactly one of the convex hull boundaries $\partial_1{W_0}$ and  $\partial_2{W_0}$, so one of the numbers  
$d(p,\partial_1{W_0})$ and  $d(p,\partial_2{W_0})$ is zero and the other one is larger than $2\delta_1$.

Assume that the first statement of the lemma is false. Then there exists a sequence of measured laminations $(\lambda_n,\mu_n)$ with the property $\cl(\lambda_n) \to \infty$, and there are  geodesics $l_n \in \lambda_n$, and points $p_n \in \gamma_n=\iota_n(l_n)$, such that the inequality 
\begin{equation}\label{ass-1}
\max_{i=1,2} d(p_n,\partial_i{W_n}) \le \delta_1, 
\end{equation}
holds. We may assume that  $p_n=p$, and $\gamma_n=\gamma$, 
for every $n$, where $p$ and $\gamma$ are fixed. Since $\iota_n$ is $L$-bilipschitz, after passing to a subsequence if necessary, the sequence $\iota_n$ converges (uniformly on compact sets) to a pleating map $\iota_{\infty}$. The pleating map $\iota_{\infty}$  corresponds to the pleating surface $P_{\infty}$, that has a single bending line $\gamma_{\infty}$, with the bending angle at least ${{\pi}\over{4}}$. Then $W_n$ converges to $W_{\infty}$ uniformly on compact sets in $\Ho$, where $W_{\infty}$ is the convex hull of the quasicircle that bounds $P_{\infty}$. It follows that  $d(p_n,\partial_i{W_n}) \to d(p,\partial_i{W_{\infty}})$. We may assume that $\gamma_{\infty}=\gamma_0$, where $\gamma_0$ is the bending line of the pleated surface $P_0$ defined above. Then we have $\max_{i=1,2} d(p,\partial_i{W_{\infty}}) \ge \max_{i=1,2} d(p,\partial_i{W_0})> 2\delta_1$.  But this  contradicts (\ref{ass-1}).

We now prove the second statement of the lemma. Let $\gamma$ be a geodesic in $W$ that is not in $\iota(\lambda)$. 
Then  we can find a point  $p \in \gamma$, such that $d(p,\iota(\lambda))>\cl(\lambda)$.  
Assuming that the second statement is false, we again produce a sequence $\lambda_n$ with $\cl(\lambda_n) \to \infty$, and such that for 
some sequence of geodesics  $\gamma_n \subset W_n$, that do not belong to $\iota(\lambda_n)$, and all the points $p \in \gamma_n$, the inequality  
\begin{equation}\label{ass-2}
\max_{i=1,2} d(p,\partial_i{W_n}) \ge {{\delta_1}\over{3}}, 
\end{equation}
holds for $n$ large enough. By the previous discussion, there exists a sequence
of points $p_n \in \gamma_n$, such that $d(p_n,\iota_n(\lambda_n))>\cl(\lambda_n)$.

Let $q_n \in \iota_n(\Ha)$ be points such that $d(p_n,q_n)<M_0$, where $M_0$ is the constant defined at the beginning of the proof. 
Let $z_n \in \Ha$, such that $q_n=\iota(z_n)$. We may assume that $z_n=0$ and $q_n=q$, for some point $q$ that we fix. 
Then $p_n \to p$, where $d(p,q)\le M_0$. Moreover, since $\cl(\lambda_n) \to \infty$, the pleating maps $\iota(\lambda_n)$ converge to 
an isometry uniformly on compact sets in $\Ha$. In particular, the sequence of convex hulls $W_n$ converges to a geodesic plane uniformly 
on compact sets, and therefore $d(p_n,\partial_i{W_n}) \to 0$.  
But this contradicts (\ref{ass-2}), and thus we have completed the proof of the lemma.

\end{proof}

\subsection{$(\epsilon,R)$ skew pants}

We let $S$ continue to denote  a closed surface with a generalized pants decomposition $\Col$, and we fix a holomorphic   representations 
$\rho_{z,w}$ as in Proposition \ref{prop-rep}.

Let $\Col_0 \subset \Col$ denote a sub-collection of curves, and for each $C \in \Col_0$ we choose  
a number $-{{3}\over{4}}\pi < \theta_C<{{3}\over{4}} \pi$ (for each curve $C \in (\Col \setminus \Col_0)$ we set $\theta_C=0$).

For $C \in \Col$, let $\zeta_C, \eta_C \in \D$, where $\D$ denotes the unit disc in the complex plane.  Let $\tau\in \D$ 
denote a complex parameter and let $t \in \{0,1\}$.  Fix $R>1$, and let $z:\D \to \C^{\Col}_{+}$ and  $w:\D \to \C^{\Col}$ be the mappings given by
$$
z(C)(\tau)={{R}\over{2}}+ {{\tau \zeta_C}\over{2}}, 
$$
and 
$$
w(C)(\tau,t)=1+it \theta_C+{{\tau \eta_C} \over{R}}. 
$$
The maps $z(\tau)$ and $w(\tau,t)$ are complex linear, and therefore holomorphic in $\tau$ and $t$. Therefore the induced family of representations  $\rho_{\tau,t}=\rho_{z(\tau),w(\tau,t)}$ is 
holomorphic in $\tau$ and $t$. Note that $\rho_{\tau,t}$  depends on  $R$, $\zeta_C$, $\eta_C$ and  $\theta_C$, but we suppress this.

The representation $\rho_{0,0}$ is Fuchsian. Let $S_0$ denote the Riemann 
surface isomorphic to the quotient $\Ha / \rho_{0,0}(\pi_1(S))$   (we also equip $S_0$ with the corresponding hyperbolic metric). 
Let $\cl(\rho_{0,0})$ denote the largest number so that  the collection of collars (of width $\cl(\rho_{0,0})$) around the curves from $\Col_0$ is disjoint on  $S_0$.

The representation $\rho_{0,1}$ is not Fuchsian (unless $\theta(\Col_0)=0$), 
and the following proposition gives a sufficient condition for it to be quasifuchsian.

We adopt the following notation. Let $G(\tau,t)=\rho_{\tau,t}(\pi_1(S))$. If $G(\tau,t)$ is a quasifuchsian group we let  $f_{\tau,t}:\partial{\Ha} \to \partial{\Ho}$, 
denote the quasiconformal map that conjugates $G(0,0)$ to $G(\tau,t)$. The following theorem is a generalization of Theorem 2.2 from \cite{kahn-markovic-1} 
(see Theorem \ref{thm-geometry-old} below). Assuming the above notation, we have:

\begin{theorem} \label{thm-geometry} There exist universal constants $\wh{R},\wh{\epsilon}, M>0$, such that the following holds. 
If $\cl(\rho_{0,0})>M$, then for every $R \ge \wh{R}$ and $|\tau| <\wh{\epsilon}$, and any choice of constants $\eta_C,\zeta_C \in \D$, and   
$-{{3}\over{4}}<\theta_C<{{3}\over{4}}$, for $C \in \Col_0$,  the group $G(\tau,1)$  is quasifuchsian and the  induced quasiconformal map  $f_{\tau,1} \circ f_{0,1}$ 
(that conjugates $G(0,1)$  to  $G(\tau,1)$),  is  $K(\tau)$-quasiconformal, where 
$$
K(\tau)={{\wh{\epsilon} +|\tau|}\over{\wh{\epsilon}-|\tau|}}.
$$
\end{theorem}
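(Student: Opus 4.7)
My plan is a two-stage bootstrap. First I would reduce the bent quasifuchsian assertion to the unbent one by invoking Theorem \ref{thm-sparsely}, and then I would sharpen the dilatation estimate to the Schwarz-type bound $K(\tau) = (\widehat{\epsilon}+|\tau|)/(\widehat{\epsilon}-|\tau|)$ by a standard holomorphic-motion argument.

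For the first stage I would invoke the unbent predecessor Theorem \ref{thm-geometry-old} (i.e.\ Theorem 2.2 of \cite{kahn-markovic-1}). Applied to the family $\rho_{\tau,0} = \rho_{z(\tau),w(\tau,0)}$, it gives that for $R \ge \widehat{R}$ and $|\tau| < \widehat{\epsilon}$ the group $G(\tau,0)$ is quasifuchsian and the map $\rho_{\tau,0} \circ \rho_{0,0}^{-1} \colon G(0,0) \to G(\tau,0)$ is $K_0$-quasifuchsian for a universal $K_0$, uniformly in $\zeta_C, \eta_C \in \D$. I would then apply Theorem \ref{thm-sparsely} with $(z_0,w_0) = (z(0), w(0,0)) \in \R^{\Col}_{+} \times \R^{\Col}$ as the Fuchsian basepoint, $(z_1,w_1) = (z(\tau), w(\tau,0))$, and $(z_2, w_2) = (z_1, w_1 + i\theta)$, so that $w_2(C) = w(C)(\tau,1)$. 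The collar hypothesis translates into $\cl(\rho_{0,0}) \ge C$, which holds once the threshold $M$ is taken $\ge C$. The conclusion is that $G(\tau,1)$ is $K_1$-quasifuchsian for some $K_1$ depending only on $K_0$ and the universal constants, and in particular independent of $\tau$, $\zeta_C$, $\eta_C$ and $\theta_C$.

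For the second stage, note that at $t=1$ both $z(C)(\tau) = R/2 + \tau\zeta_C/2$ and $w(C)(\tau,1) = 1 + i\theta_C + \tau\eta_C/R$ are complex-linear in $\tau$, so by Proposition \ref{prop-rep} the family $\tau \mapsto \rho_{\tau,1}$ is holomorphic on the disc $\{|\tau| < \widehat{\epsilon}\}$, and by Stage 1 each member is uniformly $K_1$-quasifuchsian. Consequently $\tau \mapsto f_{\tau,1} \circ f_{0,1}^{-1}$ restricts to a holomorphic motion of the limit set $\LM(G(0,1))$ parametrized by this disc, fixing the identity at $\tau = 0$. I would then apply Slodkowski's $\lambda$-lemma to extend this to a holomorphic motion of $\partial\Ho$, and invoke the standard Bers--Royden / Schwarz--Pick bound on its Beltrami coefficient to conclude that $f_{\tau,1} \circ f_{0,1}^{-1}$ is $K(\tau)$-quasiconformal with $K(\tau) = (\widehat{\epsilon}+|\tau|)/(\widehat{\epsilon}-|\tau|)$, as claimed.

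The main obstacle I anticipate is the bookkeeping in Stage 1: one must verify that the unbent Theorem \ref{thm-geometry-old} yields a constant $K_0$ that is genuinely uniform over $(\zeta_C,\eta_C) \in \D^{\Col} \times \D^{\Col}$ and $|\tau| < \widehat{\epsilon}$, and one must reconcile the constants $\widehat{R},\widehat{\epsilon},M$ coming from that theorem with the universal $K$ and $C$ produced by Theorem \ref{thm-sparsely} into a single consistent choice. Once this calibration is done, Stage 2 is a routine consequence of the $\lambda$-lemma machinery.
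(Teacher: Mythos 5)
Your proposal matches the paper's argument: Stage 1 (quasifuchsianity) is exactly the paper's reduction via Theorem \ref{thm-geometry-old} followed by Theorem \ref{thm-sparsely}, with the same bookkeeping of constants ($\wh{\epsilon}$ shrunk until $K(\tau)$ falls below the threshold $K$ from Theorem \ref{thm-sparsely}, and $M$ taken at least $C$). For Stage 2 the paper simply cites the fact that a holomorphic map from a disc into Teichm\"uller space is a contraction for the Teichm\"uller (Kobayashi) metric, whereas you derive the same Schwarz--Pick bound by viewing $\tau \mapsto f_{\tau,1}\circ f_{0,1}^{-1}$ as a holomorphic motion and invoking Slodkowski; these are equivalent formulations of the same estimate, so the route is essentially identical.
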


Let $\Col_0(\tau,t)$ denote the collection of axes of  elements of the form $\rho_{\tau,t}(c)$, where $c \in \pi_1(S)$ and $c$ belongs to the conjugacy class 
of some curve $C \in \Col_0$. Then by definition, the set $\Col_0(\tau,t)$ is invariant under the group $G(\tau,1)$. Next, we prove that $\Col_0(\tau,1)$ is  invariant under any M\"obius transformation 
from $\PSLC$ that preserves the limit set of $G(\tau,1)$. The following theorem is the main result of this section.

\begin{theorem}\label{thm-rigid} There exist  constants  $\wh{\epsilon}_1, M_1>0$, with the following properties. Assume that $\cl(\rho_{0,0})>M_1$ and let $|\tau|<\wh{\epsilon}_1$. 
If  $T \in \PSLC$, is a M\"obius transformation  that preserves the limit set of $G(\tau,1)$, then the set of geodesics $\Col_0(\tau,1)$  is invariant under $T$.
\end{theorem}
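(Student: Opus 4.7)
The plan is to recognize $\Col_0(\tau,1)$ intrinsically from the geometry of the convex hull of the limit set of $G(\tau,1)$ in $\Ho$, so that any M\"obius transformation preserving the limit set automatically preserves $\Col_0(\tau,1)$. The engine is Lemma \ref{lemma-rigid}, which characterizes the bending geodesics of a sparsely pleated surface purely in terms of their relative position to the two sides of the convex hull.

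Choosing $\wh{\epsilon}_1 \le \wh{\epsilon}$ and $M_1 \ge M$, Theorem \ref{thm-geometry} ensures that $G(\tau,1)$ is quasifuchsian; let $\Lambda$ denote its limit set (a quasicircle) and $W \subset \Ho$ its convex hull, with boundary components $\partial_1 W, \partial_2 W$. By construction, one boundary, say $\partial_1 W$, is the pleated surface $\iota(\Ha)$ whose lifted bending lamination $\lambda$ is carried by $\iota$ onto $\Col_0(\tau,1)$, with bending weight $\theta_C$ on the geodesic corresponding to $C \in \Col_0$. Since $T$ preserves $\Lambda$, it preserves $W$ and the unordered pair $\{\partial_1 W, \partial_2 W\}$; consequently $T$ sends geodesics of $W$ to geodesics of $W$ and preserves the function $p \mapsto \max_{i=1,2} d(p, \partial_i W)$. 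Applying Lemma \ref{lemma-rigid} to $(\lambda,\mu)$ with $\mu$ the bending measure yields the dichotomy: a geodesic $\gamma \subset W$ lies in $\Col_0(\tau,1)$ if and only if $\max_{i=1,2} d(p,\partial_i W) \ge \delta_1/3$ for every $p \in \gamma$ (the forward direction is part (1) of the lemma, the reverse is the contrapositive of part (2)). This criterion is intrinsic to the pair $(W, \{\partial_1 W, \partial_2 W\})$, hence $T$-invariant, and the theorem follows.

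The main obstacle is verifying that the hypotheses of Lemma \ref{lemma-rigid}, namely $\cl(\lambda) > C_1$ and $\pi/4 \le |\mu(l)| \le 3\pi/4$ on every leaf, survive the $\tau$-perturbation. The collar-width condition follows by taking $M_1$ large and $\wh{\epsilon}_1$ small: the bound $K(\tau) \to 1$ as $\tau \to 0$ from Theorem \ref{thm-geometry} keeps $\cl(\lambda)$ essentially comparable to $\cl(\rho_{0,0})$, and an analogous comparison controls the perturbed bending angles in terms of the prescribed $\theta_C$. The angle condition forces us, within the range $-3\pi/4 < \theta_C < 3\pi/4$ allowed by the setup, to restrict attention to the curves with $|\theta_C| \ge \pi/4$; for curves with smaller bending angles the invariance statement must be obtained by auxiliary means, or the theorem silently imposes this lower bound. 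A secondary subtlety is that $T$ may swap the two boundary components of $W$, but this is harmless because the dichotomy above is symmetric in $\partial_1 W$ and $\partial_2 W$.
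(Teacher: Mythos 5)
Your proof follows the same approach as the paper's: characterize $\Col_0(\tau,1)$ intrinsically by the function $p \mapsto \max_{i} d(p,\partial_i W)$ using Lemma~\ref{lemma-rigid}, first at $\tau=0$ and then stably under the $K(\tau)$-quasiconformal perturbation provided by Theorem~\ref{thm-geometry}. Two small remarks. First, your assertion that ``one boundary, say $\partial_1 W$, is the pleated surface $\iota(\Ha)$'' is not correct in general: the proof of Lemma~\ref{lemma-rigid} only records $\iota(\Ha)\subset W$ (and that every point of $W$ is within bounded distance $M_0$ of $\iota(\Ha)$), and when the bending weights have mixed signs the pleated surface coincides with neither boundary face. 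Fortunately your argument never actually uses this identification---the dichotomy criterion refers only to $\partial_1 W, \partial_2 W$---so this is a cosmetic error rather than a gap. Second, your worry about the hypothesis ${\pi}/{4}\le|\mu(l)|\le{3\pi}/{4}$ is legitimate: the setup of Theorem~\ref{thm-rigid} nominally allows $-{3\pi}/{4}<\theta_C<{3\pi}/{4}$, including small $|\theta_C|$ where part (1) of Lemma~\ref{lemma-rigid} fails, and the paper's own proof passes over this silently; in the only place the theorem is invoked (the amalgamation construction in the lower-bound section) the bending angles are all approximately ${\pi}/{2}$, so the lower bound on $|\theta_C|$ is in fact satisfied, but stated at full generality the theorem does need the implicit restriction you identified.
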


Compare this theorem with Lemma 4.2 in \cite{masters}.

\begin{proof} Let $W(\tau,t)$ denote the convex hull of the limit set of $G(\tau,t)$. It follows from Lemma \ref{lemma-rigid} that for $\cl(\rho_{0,0})$ large enough, the following holds

\begin{enumerate}
\item For every $\gamma \in \Col_0(0,1)$ and $p \in \gamma$, the inequality $\max_{i=1,2} d(p,\partial_i{W(0,t)})> \delta_1$ holds, 
\item For every $\gamma \subset W(0,1)$ the inequality, there exists $p \in \gamma$ such that   $\max_{i=1,2} d(p,\partial_i{W(0,1)})< {{\delta_1}\over{2}}$.
\end{enumerate}

Then by Theorem \ref{thm-geometry} we can choose $\wh{\epsilon}_1$ small enough so that for $|\tau|<\wh{\epsilon}_1$, the constant $K(\tau)$ (from Theorem \ref{thm-geometry}) is close enough to $1$, 
so that the following holds:

\begin{enumerate}
\item For every $\gamma \in \Col_0(\tau,1)$ and $p \in \gamma$, the inequality $\max_{i=1,2} d(p,\partial_i{W(0,t)})> {{4\delta_1}\over{5}}$ holds, 
\item For every $\gamma \subset W(0,1)$ the inequality, there exists $p \in \gamma$ such that   $\max_{i=1,2} d(p,\partial_i{W(0,1)})< {{2\delta_1}\over{3}}$.
\end{enumerate}

Then any M\"obius transformation $A \in \PSLC$ that preserves $W(\tau,1)$ will also preserve the set $\Col(\tau,1)$. This proves the theorem.

\end{proof}

\subsection{A proof of Theorem \ref{thm-geometry}}  We need to  prove that $G(\tau,1)$ is a quasifuchsian group. The last estimate in Theorem \ref{thm-geometry} then follows from the fact that a 
holomorphic map from the unit disc into the Teichm\"uller space of a Riemann surface is a contraction with respect to the hyperbolic metric on the unit disc and the Teichm\"uller metric.

Recall  Theorem 2.2 from \cite{kahn-markovic-1}.

\begin{theorem} \label{thm-geometry-old}  There exist universal constants $\wh{R},\wh{\epsilon}$, such that the following holds. 
For every $R \ge \wh{R}$ and $|\tau| <\wh{\epsilon}$, and any choice of constants $\eta_C, \zeta_C \in \D$,  the group $G(\tau,0)$  
is quasifuchsian, and the  induced quasiconformal map  $f_{\tau,0}$ that conjugates $G(0,0)$  to  $G(\tau,0)$,  is  $K(\tau)$-quasiconformal, where 
$$
K(\tau)={{\wh{\epsilon} +|\tau|}\over{\wh{\epsilon}-|\tau|}}.
$$

\end{theorem}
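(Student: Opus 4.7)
The plan is to prove Theorem \ref{thm-geometry-old} by constructing, for each $|\tau|<\wh\epsilon$, an explicit bilipschitz equivariant map $\wt g_\tau:\Ha\to\Ho$ conjugating $G(0,0)$ to $G(\tau,0)$, and then upgrading the resulting quasiconformality to the stated Schwarz-type estimate using the fact that $\tau\mapsto[\rho_{\tau,0}]$ is a holomorphic map of the disk $\{|\tau|<\wh\epsilon\}$ into the Teichm\"uller space of $S$. Note that the base representation $\rho_{0,0}$ is Fuchsian (since both $z(C)(0)=R/2$ and $w(C)(0,0)=1$ are real), so $G(0,0)\subset\PSLR$ acts on a hyperbolic structure $S_0=\Ha/G(0,0)$ whose cuffs from $\Col$ all have length $R$ (up to covering degrees). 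By the collar lemma, for $R\ge\wh R$ large, $\cl(\rho_{0,0})$ is also large, giving wide disjoint standard collars around every cuff.

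First, on each (generalized) pair of pants $\wt\Pi$ of the decomposition, the restriction $\rho_{\tau,0}|_{\pi_1(\wt\Pi)}$ is a small complex-analytic perturbation of a Fuchsian pair of pants with half-lengths near $R/2$. Because a pair of pants has free rank-two fundamental group, for $|\tau|<\wh\epsilon$ small the perturbed pant is a skew pair of pants that is $(1+O(|\tau|))$-quasifuchsian; this gives an explicit bilipschitz identification of the ideal convex core of the Fuchsian pant with that of the skew pant. On the universal cover, lift this identification to obtain, on each ideal hexagonal piece of $\Ha$ (the lift of a pant), a bilipschitz map onto the corresponding piece of the skew pant in $\Ho$. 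Arrange (by choosing the Douady--Earle or similar canonical extension) that this map sends each lift of a cuff geodesic of $\Col$ onto the axis of the corresponding loxodromic element of $\rho_{\tau,0}$.

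Second, reconcile the two local pant maps across each cuff $C$ using the twist data. The twist perturbation is $w(C)(\tau,0)-w(C)(0,0)=\tau\eta_C/R$, which is $O(|\tau|/R)$. The collar of width $\cl(\rho_{0,0})\gtrsim \log R$ around $C$ in $S_0$ contains enough room to absorb a correction of complex twist size $O(|\tau|/R)$: interpolating linearly across the collar a M\"obius twist--bend of total magnitude $\tau\eta_C/R$ produces a $C^1$ map whose per-unit-length deformation is tiny, so by Lemma \ref{lemma-elem-1} (or its direct ancestor in \cite{kahn-markovic-1}) the resulting equivariant map $\wt g_\tau:\Ha\to\Ho$ is $L$-bilipschitz with $L=1+O(|\tau|)$ uniformly in $R\ge\wh R$. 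In particular $G(\tau,0)$ is $K_0$-quasifuchsian for some $K_0=K_0(\wh\epsilon)$ whenever $|\tau|<\wh\epsilon$.

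Finally, apply the hyperbolic Schwarz lemma. Once we know $[\rho_{\tau,0}]$ lies in the quasifuchsian slice for all $|\tau|<\wh\epsilon$, the map $\tau\mapsto[\rho_{\tau,0}]$ is a holomorphic map from the disk of hyperbolic radius $\log\frac{\wh\epsilon+|\tau|}{\wh\epsilon-|\tau|}$ into the Teichm\"uller space of $S$ equipped with the Teichm\"uller (equivalently Kobayashi, by Royden) metric; the Teichm\"uller distance dominates $\tfrac12\log K$ for the optimal QC conjugacy, yielding $K(\tau)=(\wh\epsilon+|\tau|)/(\wh\epsilon-|\tau|)$ as claimed. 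The main obstacle is the bilipschitz gluing in the middle step: one must show that the individual pants' canonical extensions, which need not \emph{exactly} match a pure twist on the axis of a cuff, differ from a twist by an error controlled by $|\tau|/R$, so that it can be absorbed by interpolation over a collar of width $\Theta(\log R)$. This is precisely where the hypotheses $R\ge\wh R$ and $\cl(\rho_{0,0})$ large are used.
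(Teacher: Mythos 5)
First, a point of comparison with the paper: the paper does not prove Theorem \ref{thm-geometry-old} at all --- it is quoted verbatim as Theorem 2.2 of \cite{kahn-markovic-1}, where it is the main technical result and occupies most of that paper. So you are not reconstructing an argument given here; you are attempting to reprove the principal theorem of the prior work in a paragraph. Your final step is fine and matches the mechanism the authors themselves use for the analogous estimate in Theorem \ref{thm-geometry}: once $G(\tau,0)$ is known to be quasifuchsian for \emph{all} $|\tau|<\wh{\epsilon}$, the map $\tau\mapsto[\rho_{\tau,0}]$ is holomorphic into Teichm\"uller space, and the Schwarz--Pick lemma gives $K(\tau)=(\wh{\epsilon}+|\tau|)/(\wh{\epsilon}-|\tau|)$.

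The genuine gap is in your middle step, and it is fatal as written. You assert that for $R\ge\wh{R}$ large the cuffs of $S_0$ carry disjoint collars of width $\cl(\rho_{0,0})\gtrsim\log R$, over which the twist--bend mismatch of size $O(|\tau|/R)$ can be interpolated away. The collar lemma goes the other way: an embedded collar about a closed geodesic of length $\ell$ has half-width comparable to $e^{-\ell/2}$, and in a perfect pair of pants with all cuff lengths $R$ the three cuffs lie within distance $O(e^{-R/2})$ of one another. Thus the collars available for your interpolation are exponentially \emph{thin} in $R$, and spreading a M\"obius correction of size $|\tau|/R$ over a region of width $e^{-R/2}$ produces pointwise distortion of order $e^{R/2}|\tau|/R$, which blows up; Lemma \ref{lemma-elem-1} is inapplicable because its hypothesis requires the pieces to have length bounded \emph{below} by a constant $C$. (The hypotheses ``$\cl$ large'' in Theorems \ref{thm-sparsely} and \ref{thm-geometry} concern only the sparse subfamily $\Col_0$ of bending curves, not the full pants decomposition $\Col$; for $\Col$ itself no such hypothesis can hold for large $R$.) This is exactly why Theorem 2.2 of \cite{kahn-markovic-1} is difficult: no local collar-by-collar interpolation can work, and the argument there must exploit the specific normalization $s(C)=1+O(\epsilon/R)$ --- the unit shear staggers the feet of the seams of adjacent pants along each cuff, which is what permits the errors accumulated along an arbitrary geodesic to be summed to $O(\epsilon)$. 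Your sketch uses only the smallness of $s(C)-1$, never the value $1$ itself, so it cannot distinguish the true hypothesis from, say, zero shear, for which the argument of \cite{kahn-markovic-1} breaks down. The correct course here is to do what the paper does: cite Theorem 2.2 of \cite{kahn-markovic-1} rather than attempt to reprove it.
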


The group $G(\tau,1)$ is obtained from the group $G(\tau,0)$, by bending along the lifts of curves $C \in \Col_0$, for the angle $\theta_C$. 
It follows from Theorem \ref{thm-sparsely} that the group $G(\tau,1)$ is quasifuchsian if $\cl(\rho_{0,0})>C$, and if the  map $f_{\tau,0}$ is $K$-quasiconformal, where $K$ is close enough to $1$. 
But it follows from Theorem \ref{thm-geometry-old}  that for 
$|\tau|$ small enough this will be the case. This proves Theorem \ref{thm-geometry}.

\section{The lower bound}

\subsection{Amalgamating two representations}  Let $S$ denote a closed surfaces with generalized pants decompositions $\Col$, and let $\rho:\pi_1(S) \to \PSLC$ denote an admissible (in sense of Definition \ref{def-1}) representation with the reduced Fenchel-Nielsen coordinates satisfying the inequalities
$$
|\hl(C)-{{R}\over{2}}| \le \epsilon,
$$
and
$$
|s(C)-1|\le {{\epsilon}\over{R}},
$$
for some $\epsilon, R>0$, and $C \in \Col$. We say that such a representation is $(\epsilon,R)$-good.

Let $\M$ denote a closed hyperbolic manifold such that $\M=\Ho /\Gamma$ for some Kleinian group $\Gamma$. In \cite{kahn-markovic-1} we proved that one can find many $(\epsilon,R)$-good representations 
$\rho:\pi_1(S) \to \Gamma$, for a given  $\epsilon>0$ and $R$ large enough. Moreover, if $A \in \Gamma$ has the translation length $\len(A)$  satisfying the inequality $|\len(A)-R|\le {{\epsilon}\over{2}}$, then we can find such $\rho$ so that $A$ is in the image of $\rho$. From now on we assume that such $A \in \Gamma$ is primitive, that is $A$ is not equal to an integer power of another element of $\Gamma$.

In particular, it follows from Section 4 of  \cite{kahn-markovic-1}  (the statements about the equidistribution of $(\epsilon,R)$-good pairs of skew pants around a given 
closed curve in $\M$ whose length is $\epsilon$ close to $R$) that we can find  two $(\epsilon,R)$-good representations $\rho(i):\pi_1(S(i)) \to \Gamma$, $i=1,2$, 
where $S(1)$ and $S(2)$ are two closed surfaces with pants decompositions 
$\Col(i)$, and two pars of pants $\Pi^{+}_i$ and $\Pi^{-}_i$  with the following properties:  

\begin{itemize}
\item There are two oriented, degree one  curves $C(i) \in \Col(i)$, and $c(i) \in \pi_1(S(i))$ in the conjugacy classes of $C(1)$ and $C(2)$ respectively, such 
that $\rho(1)(C(1))=\rho(2)(C(2))=[A]$, where $[A]$ is the conjugacy class of a given primitive element  $A \in \Gamma$, whose  translation length $\len(A)$  satisfies the inequality 
$|\len(A)-R|\le {{\epsilon}\over{2}}$. 
\item Let $\gamma$ denote the closed geodesic corresponding to   $A$. There exist  two pars of skew pants $\Pi^{+}_i$ and $\Pi^{-}_i$ in  $\rho(i)(\pi_1(S(i)))$ such that $\gamma$ is positively 
oriented boundary  component of $\Pi^{+}_i$ and negatively oriented for $\Pi^{-}_i$, and  recalling the notation from  \cite{kahn-markovic-1}  we have  the inequality

\begin{equation}\label{est-ang}
|\foot_{\gamma}(\Pi^{+}_2)-\foot_{\gamma}(\Pi^{-}_1) - {{\pi}\over{2}}| \le {{\epsilon}\over{R}}. 
\end{equation}

\end{itemize}

After replacing $S(1)$ and $S(2)$ with appropriate finite degree covers if necessary, we may assume in addition to the above two conditions the following also hold

\begin{itemize}

\item The curves $C(1)$ and $C(2)$ are non-separating simple closed curves in $S(1)$ and $S(2)$ respectively,
\item The surfaces $S(1)$ and $S(2)$ have the same genus,
\item  By Proposition \ref{prop-rep} the representation $\rho(i)$ can be embedded in the holomorphic family of representations $\rho_{\tau,t}(i)$. We may assume that  
$\cl(\rho_{0,0}(S(i)))>C_1$, $i=1,2$, where $C_1$ is the constant from Theorem \ref{thm-rigid}.

\end{itemize}

We now fix such two representations $\rho(1)$ and $\rho(2)$, surfaces $S(1)$ and $S(2)$, and the two oriented curves $C(1)$ and $C(2)$ (we also fix the corresponding primitive element $A \in \Gamma$).

Let $i \in \{1,2\}$. For $n>1$, let $S_n(1)$ and $S_n(2)$ denote two primitive degree $n$ covers of $S(1)$ and $S(2)$ respectively  (a finite cover of a surface is primitive if it does 
not factor through an intermediate cover), such that for some 
$1 \le k \le (n-1)$,   the curves $C(1)$ and $C(2)$ have two degree $k$ lifts $C_n(1)$ and $C_n(2)$. Then  $C_n(1)$ and $C_n(2)$ are two oriented, non-separating simple closed curves in $S_n(1)$ and $S_n(2)$ respectively. We then have the two induced representations  $\rho_n(i):\pi_1(S_n(i)) \to \Gamma$, that also satisfy the above five conditions, except that 
$$
\rho_n(1)(\pi_1(S_n(1))) \cap \rho_n(2)(\pi_1(S_n(2)))=\{A^{k}\}.
$$

We amalgamate them as follows. Cut the surface $S_n(i)$ along $C_n(i)$, to get two  topological surfaces $\overline{S}_n(i)$, $i=1,2$,  
each having two boundary components $C^{1}_n(i)$ and $C^{2}_n(i)$.  We glue together the surfaces $\overline{S}_n(1)$ and  $\overline{S}_n(2)$ by 
gluing $C^{j}_n(1)$ to $C^{j}_n(2)$, $j=1,2$,  and obtain a closed topological surface $S_n$ (this is well defined up to  a twist by $\Re (\len(A))$ which has a period $k$).
The surface $S_n$ has the induced generalized pants decomposition $\Col_n$. The  pair of curves $C^{1}_n(1)$ and $C^{1}_n(2)$ that were glued together  produce a closed curve 
$C^{1}_n$ in $S_n$. Similarly,  the pair of curves $C^{2}_n(1)$ and $C^{2}_n(2)$ that were glued together produce a closed curve $C^{2}_n$ in $S_n$. 
We set $\Col_{0,n}=\{C^{1}_{n},C^{2}_{n}\}$.

Then there is the induced representation $\rho_n:\pi_1(S_n) \to \Gamma$. We orient the curves $C^{1}_{n}$ and $C^{2}_{n}$ such that for any choice of $c_i \in \pi_1(S_n)$, 
where $c_i$ is in the conjugacy class of $C^{i}_n$, we have that both $\rho_n(c_1)$ and  $\rho_n(c_2)$ are in the conjugacy class of $A^{k}$ in $\Gamma$. 

The  representation $\rho_n$ has the reduced Fenchel-Nielsen coordinates satisfying the relations 
$$
|\hl(C)-{{R}\over{2}}| \le \epsilon,
$$
and
$$
|s(C)-1|\le {{\epsilon}\over{R}},
$$
if $C$ does not belong to $\Col_{0,n}$, and 
$$
|s(C)-(1+i{{\pi}\over{2}})|  \le {{\epsilon}\over{R}},
$$
if $C \in \Col_{0,n}$.

It follows from Theorem \ref{thm-geometry} that for $\epsilon$ small enough and $R$ large enough, the group $\rho_n(\pi_1(S_n))$ is  quasifuchsian. 
In the remainder of this subsection we prove that the group  $\rho_n(\pi_1(S_n))$ is a maximal subgroup of $\Gamma$.

First we prove a preliminary lemma.  Let $\overline{S}$ be a surface with boundary components $C_{+}$ and $C_{-}$, oriented so that $\overline{S}$ is on the left of $C_{+}$
and the right of $C_{-}$. We say that $f:\overline{S} \to \M$ is rejoinable if the restrictions of $f$ to $C^{+}$ and $C_{-}$ respectively are freely homotopic in $\M$. 
We say $(f,\overline{S})$ is geodesically rejoinable if $f|_{C_{+}}$ and $f|_{C_{-}}$ map to the same closed geodesic in $\M$. In this case we say a rejoining of $(f,\overline{S})$
is a homeomorphism $h:C_{+} \to C_{-}$ such that $f \circ h=f$, and we say $(f,\overline{S}/h)$ is $\overline{S}$ rejoined by $h$. 

\begin{lemma}\label{lemma-A} If $(f,\overline{S})$, and $(g,\overline{T})$ are (geodesically) rejoinable surfaces, and $\pi:\overline{S} \to \overline{T}$ is a finite cover such that 
$g \circ  \pi$ is homotopic to $f$ , then for any rejoining $h$ of $(f,\overline{S})$ we can find a rejoining $k$ of $(g,\overline{T})$ such that $(f,\overline{S})$ rejoined by $h$ covers 
$(g,\overline{T})$ rejoined by $k$.
\end{lemma}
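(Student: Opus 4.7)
My plan is to reduce the lemma to a calculation on the universal cover $\R$ of the common closed geodesic $\gamma \subset \M$ onto which all four boundary circles project. First I would analyze the boundary behavior of $\pi$: since $\overline{S}$ has exactly two boundary components and $\pi$ is a finite cover of $\overline{T}$ (also with two boundary components), connectedness forces each $C_\pm$ to cover a distinct boundary component of $\overline{T}$ with full degree $N = \deg \pi$; orientation compatibility gives $\pi(C_+) = D_+$ and $\pi(C_-) = D_-$. Let $d$ and $e$ denote the common degrees of $f|_{C_\pm} \colon C_\pm \to \gamma$ and $g|_{D_\pm} \colon D_\pm \to \gamma$ respectively; then $d = Ne$, and the existence of the rejoining $h$ forces the degrees on the two sides of $\overline{S}$ (and hence of $\overline{T}$) to agree.

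Next I would parametrize all four circles coherently. Let $\ell$ be the length of $\gamma$ and identify $\gamma \cong \R/(\ell\Z)$; choose identifications $C_\pm \cong \R/(d\ell\Z)$ and $D_\pm \cong \R/(e\ell\Z)$ such that all projections to $\gamma$ become reduction modulo $\ell$ and the restrictions $\pi|_{C_\pm}$ become reduction modulo $e\ell$. Since the rejoining $h \colon C_+ \to C_-$ is an orientation-preserving homeomorphism commuting with the projection to $\gamma$, in these coordinates it must take the form $h(t) = t + j\ell$ for some $j \in \{0, 1, \dots, d-1\}$. I would then define $k \colon D_+ \to D_-$ as the translation $k(t) = t + (j \bmod e)\ell$. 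Then $k$ is manifestly a rejoining of $(g, \overline{T})$, and a direct computation shows $\pi \circ h = k \circ \pi$: both sides send $t \in \R/(d\ell\Z)$ to the residue of $t + j\ell$ modulo $e\ell$.

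Finally, the compatibility $\pi \circ h = k \circ \pi$ implies that $\pi$ descends to a continuous map $\bar\pi \colon \overline{S}/h \to \overline{T}/k$; since $\pi$ was a finite cover, $\bar\pi$ remains a finite cover of degree $N$, and by construction $g \circ \bar\pi$ agrees with the map induced by $f$ on $\overline{S}/h$. Thus $(f, \overline{S}/h)$ covers $(g, \overline{T}/k)$ as required. I do not foresee serious obstacles: once the coherent parametrization is set up, the construction of $k$ and the verification of compatibility reduce to a one-line computation in modular arithmetic. The only mildly delicate point is ensuring the boundary components of $\overline{S}$ pair up correctly with those of $\overline{T}$, which follows from elementary covering-space considerations applied to the fact that $\overline{S}$ has exactly two boundary circles.
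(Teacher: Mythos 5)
The paper does not supply a proof here ("Left to the reader"), so there is no paper argument to compare against; I will evaluate your proposal on its own terms. Your overall strategy is the right one: reduce to bookkeeping of degrees and twist parameters on the four boundary circles, show that $\pi$ pairs $C_\pm$ with $D_\pm$ with full degree $N$, identify the rejoining $h$ with an integer $j$, and take $k$ to be the residue of $j$ modulo $e$. The boundary analysis is correct: since $\overline{S}$ has exactly two boundary circles, each of which is connected, and $\pi^{-1}(D_+\sqcup D_-)=C_+\sqcup C_-$ with both preimages nonempty, each $C_\pm$ must cover a single $D_\pm$ and must do so with the full degree $N$; orientation of the boundary then forces $\pi(C_\pm)=D_\pm$. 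The modular-arithmetic computation that $\pi\circ h=k\circ\pi$ with $k(t)=t+(j\bmod e)\ell$ is also correct once the coordinates are in place.

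The one place where you are too quick is the step ``choose identifications $\ldots$ such that all projections to $\gamma$ become reduction modulo $\ell$ and the restrictions $\pi|_{C_\pm}$ become reduction modulo $e\ell$.'' For all five of these maps to become standard reductions simultaneously you need $(g\circ\pi)|_{C_\pm}=f|_{C_\pm}$ on the nose, whereas the hypothesis only gives that $g\circ\pi$ and $f$ are homotopic; moreover you implicitly use that $f|_{C_\pm}$ and $g|_{D_\pm}$ are covering maps onto $\gamma$ (rather than arbitrary degree-$d$ and degree-$e$ loops) when you conclude that a rejoining $h$, which by definition satisfies $f|_{C_-}\circ h=f|_{C_+}$, must be a translation by a multiple of $\ell$. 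If $f|_{C_\pm}$ were not a covering, the deck group of $f|_{C_-}$ need not be cyclic of order $d$ and $h$ need not be a rotation. Both issues are handled by first normalizing: replace $f$ by $g\circ\pi$ (which changes nothing up to homotopy, and is how the lemma is used --- in the application $f$ literally factors through $\pi$) and note that for a geodesically rejoinable map one may assume each boundary restriction is a covering of $\gamma$. With that normalization made explicit, the set of rejoinings of $(f,\overline S)$ is precisely the set of translations $t\mapsto t+j\ell$, $j\in\Z/d\Z$, and your computation finishes the proof. So: right idea and right computation, but state the normalization (and why it is permissible) rather than building it silently into the choice of coordinates.
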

\begin{proof} Left to the reader.

\end{proof}

The following theorem is a corollary of Theorem \ref{thm-rigid}. We adopt the following definition. Let 
$f:S \to \M$ be a quasifuchsian map, and let $\Col_0$ denote  a collection of disjoint simple closed curves 
on $S$. We say  that $f$ is bent along each curve of $\Col_0$ and nearly locally isometric on $S \setminus \Col_0$ if
the induced map $f_*:\pi_1(S) \to \Gamma$ is of the form $\rho_{\tau,1}$ for some $|\tau| \le \wh{\epsilon}$.

\begin{theorem}\label{thm-A} Let $S$ be a closed surface. Suppose that $f:S \to \M$ is a $\pi_1$-injective and quasifuchsian, and $\Col_0$ 
is a collection of disjoint simple closed curves 
on $S$, such that $f$ is bent along each curve of $\Col_0$ and nearly locally isometric on $S \setminus \Col_0$. Suppose that $f=g \circ \pi$, 
where $\pi:S \to Q$ is a covering, and $g:Q \to \M$ is 
$\pi_1$-injective  and quasifuchsian. Then we can find a collection of simple closed curves $\wh{\Col}_0$ on $Q$ such that $\Col_0=\pi^{-1}(\wh{\Col}_0)$.
\end{theorem}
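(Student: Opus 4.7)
The plan is to lift everything to the universal cover $\Ho$ and use Theorem \ref{thm-rigid} applied to the larger group $g_*(\pi_1(Q))$. Let $G_S = f_*(\pi_1(S))$ and $G_Q = g_*(\pi_1(Q))$. Since $\pi$ is a finite covering and both $f$, $g$ are $\pi_1$-injective with $f = g \circ \pi$, the group $G_S$ is a finite index subgroup of $G_Q$; in particular their limit sets in $\partial \Ho$ coincide, and hence their convex hulls $W(\tau,1)$ agree.

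First I would observe that the hypothesis \emph{$f$ is bent along each curve of $\Col_0$ and nearly locally isometric on $S\setminus \Col_0$} exactly means, by the definition stated just before the theorem, that $f_* = \rho_{\tau,1}$ for some $|\tau| \le \wh\epsilon$ in the holomorphic family of Proposition \ref{prop-rep}. Thus the full preimage $\wt\Col_0 \subset \Ho$ of $\Col_0$ under the developing map is precisely the $G_S$-invariant collection of geodesics $\Col_0(\tau,1)$ from Theorem \ref{thm-rigid}, namely the bending lamination of the pleated surface representing $f$.

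Next I would apply Theorem \ref{thm-rigid}, whose hypotheses $\cl(\rho_{0,0})>M_1$ and $|\tau|<\wh\epsilon_1$ are arranged to hold (these are guaranteed by shrinking $\wh\epsilon$ and enlarging the collar condition if necessary, which is harmless since the family is already assumed close to Fuchsian). This tells us that $\Col_0(\tau,1)$ is preserved by every M\"obius transformation that preserves the limit set of $G_S$. Because the limit set of $G_S$ equals the limit set of $G_Q$, every element of $G_Q$ preserves $\wt \Col_0$. So $\wt\Col_0$ descends to a well-defined collection of disjoint simple closed curves $\wh\Col_0 \subset Q = \Ho / G_Q$, and by construction $\pi^{-1}(\wh\Col_0) = \Col_0$.

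The only potentially delicate point is verifying that $\wh\Col_0$ consists of \emph{simple} closed curves on $Q$ (rather than, say, a curve with a self-intersection coming from two $G_Q$-translates of one leaf of $\wt\Col_0$ meeting transversely, or a leaf being mapped to itself with orientation reversed). This follows from the fact that $\wt\Col_0$ is an embedded family of disjoint geodesics in $\Ho$ which is invariant under $G_Q$: any two $G_Q$-translates are either equal or disjoint (since they are already so as $G_S$-translates, and the $G_Q$-action merely permutes them by the preceding step), and no element of $G_Q$ can reverse a leaf because that would force a bending angle to equal its own negative, contradicting $0<|\theta_C|<\tfrac{3}{4}\pi$. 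This gives the required collection $\wh\Col_0$ and completes the proof.
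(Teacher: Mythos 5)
Your proposal is correct and takes essentially the same approach as the paper: lift $\Col_0$ to the universal cover, push forward by $\wt f = \wt g$, apply Theorem~\ref{thm-rigid} to the group $G_Q = g_*(\pi_1(Q))$ (which has the same limit set as the finite-index subgroup $G_S$), and conclude that $G_Q$ preserves $\wt\Col_0$, so $\wt\Col_0$ descends to $Q$. Your final paragraph, checking that the descended collection $\wh\Col_0$ really consists of disjoint simple closed curves, is a reasonable piece of extra care that the paper's terse proof leaves implicit.
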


\begin{proof} We get a discrete lamination $\wt{\Col}_0$ on $\Ha$, which we push forward by $\wt{f}=\wt{g}$ to $\Ho$. We find a homomorphism 
$\sigma:\Deck(\Ha / Q) \to \Gamma$
such that $\wt{f}(\gamma (x))=\sigma(\gamma)(\wt{f}(x))$ for every $x \in \Ha$ and $\gamma \in \Deck(\Ha /Q)$. 

We let $G=\sigma(\Deck(\Ha /Q))$, and $H=\sigma(\Deck(\Ha / S))<G$. Then $[G:H]<\infty$, and $G$ and $H$ are quasifuchsian groups, and 
they have the same limit set, so by Theorem \ref{thm-rigid}
every element of $G$ maps $\wt{g}(\wt{\Col}_0)$ to itself. Hence $\Deck(\Ha / Q)$ maps $\wt{\Col}_0$ to itself, so $\wt{\Col}_0$ 
is a lift of $\wh{\Col}_0$ on $Q$, and hence $\Col_0$ is.

\end{proof}

\begin{theorem} The quasifuchsian  group $\rho_n(\pi_1(S_n)) <\Gamma$ is a maximal surface subgroup of $\Gamma$, 
that is, if $\rho_n(\pi_1(S_n))<G$ for a surface subgroup $G< \Gamma$, 
then $G=\rho_n(\pi_1(S_n))$.
\end{theorem}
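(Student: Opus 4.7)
The plan is to apply Theorem \ref{thm-A} to descend the bending collection $\Col_{0,n}$ to $Q$, and then to exploit the primitivity of the covers $S_n(i) \to S(i)$ together with Lemma \ref{lemma-A} to force the covering $\pi : S_n \to Q$ to be a homeomorphism.

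First, since $G$ is a closed surface subgroup containing the closed surface subgroup $\rho_n(\pi_1(S_n))$, elementary surface group theory forces the index to be finite: any finitely generated, infinite-index subgroup of a closed surface group is free, but a closed surface group is not. A finite-index overgroup of a quasifuchsian group is quasifuchsian (it preserves the same quasicircle limit set), so $G$ is quasifuchsian, and the inclusion corresponds to a finite covering of closed surfaces $\pi : S_n \to Q$ together with a pleated quasifuchsian map $g : Q \to \M$ satisfying $f = g \circ \pi$, where $f$ is the pleated map induced by $\rho_n$. By construction (Theorem \ref{thm-geometry} at $t=1$), $f$ is bent along each curve of $\Col_{0,n}$ and nearly locally isometric on $S_n \setminus \Col_{0,n}$, so Theorem \ref{thm-A} yields a collection $\wh{\Col}_0$ of disjoint simple closed curves on $Q$ with $\pi^{-1}(\wh{\Col}_0) = \Col_{0,n}$. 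Cutting along these collections, $\pi$ restricts to finite covers $\pi_i : \overline{S}_n(i) \to \overline{Q}_{\sigma(i)}$ for $i = 1,2$ between the corresponding components.

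Finally, I would argue that each $\pi_i$ is a homeomorphism, whence so is $\pi$. Each pair $(\rho_n|_{\overline{S}_n(i)}, \overline{S}_n(i))$ is geodesically rejoinable via the identification of its two boundary components that recovers the closed surface $S_n(i)$; Lemma \ref{lemma-A} then produces a compatible rejoining of $\overline{Q}_{\sigma(i)}$, yielding a closed surface $Q(i)$ that fits into a tower $S_n(i) \to Q(i) \to S(i)$ induced by $\pi_i$ followed by the original cover. Since $S_n(i) \to S(i)$ is primitive by hypothesis, we must have $Q(i) = S_n(i)$ or $Q(i) = S(i)$; a degree count using that $Q$ must incorporate contributions from both pieces $\overline{S}_n(1)$ and $\overline{S}_n(2)$ glued along the two curves of $\wh{\Col}_0$ excludes the latter, so $\pi_i$ is a homeomorphism and consequently $G = \rho_n(\pi_1(S_n))$. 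The main obstacle is this last step: careful combinatorial bookkeeping is needed to rule out the degenerate possibility that $\sigma(1) = \sigma(2)$ (both pieces of $S_n$ covering a single piece of $Q$) and to extract the correct rejoined surface $Q(i)$ on which to apply the primitivity hypothesis.
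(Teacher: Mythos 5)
Your overall strategy matches the paper's: apply Theorem~\ref{thm-A} to descend the bending curves $\Col_{0,n}=\{C^1_n,C^2_n\}$ to a collection $\wh{\Col}_0$ on $Q$, cut both surfaces along these collections, use Lemma~\ref{lemma-A} to rejoin the resulting pieces with boundary, and then invoke the primitivity of the covers $S_n(i)\to S(i)$. The opening reduction (finite index via the free/surface dichotomy, hence $G$ quasifuchsian) is also correct and implicit in the paper.

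The genuine gap is in the case analysis you defer to ``combinatorial bookkeeping.'' Since $\pi^{-1}(\wh{\Col}_0)=\{C^1_n,C^2_n\}$, you have $|\wh{\Col}_0|\in\{1,2\}$. When $|\wh{\Col}_0|=1$ with $C_Q$ non-separating, both $\overline{S}_n(1)$ and $\overline{S}_n(2)$ cover the single component $\overline{Q}=Q\setminus C_Q$ (your $\sigma(1)=\sigma(2)$ case), and the rejoining/primitivity argument still applies. But when $|\wh{\Col}_0|=1$ with $C_Q$ \emph{separating}, the two components $\overline{Q}(1),\overline{Q}(2)$ of $Q\setminus C_Q$ each have a single boundary circle, so there is no rejoining of $\overline{Q}(i)$ at all and Lemma~\ref{lemma-A} gives you nothing; the whole primitivity mechanism is simply unavailable. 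The paper disposes of this case by an independent argument: $C^1_n$ and $C^2_n$, oriented so that both lie in $[A^k]$, cover $C_Q$ with opposite orientations, so the conjugacy class of $C_Q$ would have to be both $[A^l]$ and $[A^{-l}]$ (with $l=2k/d$), forcing some $B\in\Gamma$ with $B^{-1}A^lB=A^{-l}$; such a $B$ reverses the axis of $A$ and would have a fixed point in $\Ho$, impossible in a torsion-free Kleinian group. No ``degree count'' sees this; it is an orientation obstruction, and your sketch does not contain it.

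A secondary, softer issue: you assert ``we must have $Q(i)=S_n(i)$ or $Q(i)=S(i)$'' by primitivity, but primitivity only constrains covers that actually sit in a tower $S_n(i)\to Z\to S(i)$, and you have not shown that the rejoined surface $Q(i)$ (equivalently its group $G_{Q'}$) fits between $\rho_n(1)(\pi_1(S_n(i)))$ and $G(i)$. The paper handles this at the group level, producing the chain $\rho_n(1)(\pi_1(S_n(1)))<G_{Q'}\cap G(1)<G(1)$ with both inclusions proper (the first via $A^{k/d}$, the second because $k<n$), which is the form in which primitivity is actually applied. You should make this intermediate-subgroup step explicit rather than leaping to a homeomorphism conclusion.
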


\begin{proof} 

For simplicity let $G_n=\rho_n(\pi_1(S_n))$ and $G(1)=\rho(1)(\pi_1(S(1)))$. Also set $G_n(1)=\rho_n(\pi_1(\overline{S}_n(1)))$, where we consider 
$\pi_1(\overline{S}_n(1))$ as a subgroup of $\pi_1(S_n)$.

Let $f_n:S_n \to \M$ denote the continuous map that corresponds to the representation $\rho_n$. 
We claim that $f_n:S_n \to \M$ is primitive. If not, we can find a Riemann surface $Q$ and $\pi:S_n \to Q$ and $g:Q \to \M$ such that 
$g \circ \pi=f_n$ and $d>1$ where $d$ is the degree of the cover $\pi$.
We recall that $f_n$ is bent along $C^{1}_{n}$ and $C^{2}_n$, and nearly  isometric on the complement.  
So by Theorem \ref{thm-A}, $\{C^{1}_n,C^{2}_n \}$ are the lifts by $\pi$ of some set $\Col_Q$ 
of simple closed curves on $Q$. So $|\Col _Q|=1$ or $|\Col_Q|=2$. 

If    $|\Col_Q|=2$, then each component of $S_n \setminus \cup C^{i}_n $ maps by degree $d$ to a component of $Q \setminus \Col_Q$. 
We can then write $Q \setminus \Col_Q=\overline{Q} (1) \cup \overline{Q}(2)$
such that $\pi:\overline{S}_n(i) \to \overline{Q}(i)$ is a degree $d$ cover, and  then by Lemma \ref{lemma-A} we can rejoin the boundary 
curves of $\overline{Q}(1)$ to form $Q'(1)$ such 
that $S_n(1)$ is a cover of $Q'(1)$.  But then we get a subgroup  $G_{Q'}$ of $G_n(1)$ ( $G_{Q'}=\pi_1(Q'(1))$), and 
$G_n(1) < G_{Q'} \cap G(1) < G(1)$, where both inclusions are proper. The first inclusion is proper because 
$A^{{{k}\over{d}}} \in G_{Q'} \cap G(1) \setminus G_n(1)$, and the second is proper 
because $k <n$. This contradicts the assumption on the maximality of $G_n(1)$.

If  $|\Col_Q|=1$, we let $\Col_Q=\{C_Q\}$.  First suppose that $C_Q$ is non-separating. Then writing $Q \setminus C_Q=\overline{Q}$ we 
find that $\overline{S}_n(1)$ and   
$\overline{S}_n(2)$  are both degree ${{d}\over{2}}$ covers of $\overline{Q}$. But then we can reassemble $\overline{Q}$ to make $Q'$ 
(by Lemma \ref{lemma-A}) such that $S_n(1)$ is a degree 
${{d}\over{2}}$ cover of $Q'$, when ${{d}\over{2}} \le  k$. Then we arrive at a contradiction by the same reasoning as before.

Finally, suppose that $C_Q$ is separating. Then we can write $Q \setminus C_Q=\overline{Q}(1) \cup \overline{Q}(2)$ so that the restriction of $\pi$ to $\overline{S}_n(i)$ is a cover of 
$\overline{Q}(i)$. Then the conjugacy classes for $C^{1}_n$ and $C^{2}_n$, oriented as curves covered by the axis of $A$, are both in
$[A^{k}]$, but $C^{1}_n$ and $C^{2}_n$ both cover $C_Q$ with opposite orientations, so the conjugacy class for $C_Q$ must be both $[A^{l}]$ and $[A^{-l}]$, where  $l={{2k}\over{d}}$.
But then $B^{-1}A^{l}B=A^{-l}$ for some $B \in \Gamma$, which means that $B$ preserves the axis of $A$ and reverses its orientation; such $B$ would have a fixed point in $\Ho$, which is a 
contradiction. 

\end{proof}

\subsection{The lower bound}

We now proceed to prove the lower bound
\begin{equation}\label{est-lower}
(c_1 g)^{2g} \le s_1(\M,g),
\end{equation}
for  $g$ large enough, from Theorem \ref{thm-main}. 

By the above theorem  the  representation $\rho_n:\pi_1(S_n) \to \Gamma$,  is maximal. It remains to count the number of such representations.
Let $g_n$  denote  the genus of the surface $S_n$. If $g_0$ denotes the genus of the surfaces $S(1)$ and $S(2)$, we have  
$$
g_n= n(2g_0-1).
$$

Given a closed surface $S_0$, Let $m_n(S_0)$ denote the number of maximal degree $n$ covers of $S_0$. Let $C_0$ denote a simple closed and non-separating curve in $S_0$. For $1 \le k \le n$, by  $m_n(S_0,C_0,k)$ we denote the number of maximal $n$ degree covers of $S_0$ such that the curve $C_0$ has at least one lift of degree $k$. Clearly the number $m_n(S_0,C_0,k)$ does not depend on the choice of the simple closed and non-non-separating curve $C_0$, so we sometimes write $m_n(S_0,k)=m_n(S_0,C_0,k)$.

\begin{theorem}\label{thm-count} Let $g_0$ denote the genus of $S_0$. Then for $n$ large we have:

$$
m_n(S_0)=(n!)^{g_{0}-2} (1 + o(1)),
$$
where $o(1) \to 0$ when $n \to \infty$. Moreover, for some $1\le k \le (n-1)$, $k=k(n,g_0)$,  we have 

$$
m_n(S_0,k) > ((n-1)!)^{g_{0}-2} (1 + o(1)). 
$$

\end{theorem}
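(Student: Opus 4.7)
The plan is to derive both estimates from the asymptotic count of maximal subgroups of a surface group due to M\"uller--Puchta \cite{muller-puchta}, combined with an elementary pigeonhole argument over cycle types of the monodromy. Under the standard correspondence between conjugacy classes of index-$n$ subgroups of $\pi_1(S_0)$ and isomorphism classes of degree-$n$ covers of $S_0$, the maximal degree-$n$ covers correspond precisely to maximal index-$n$ subgroups: primitivity of the transitive action on cosets is equivalent to having no nontrivial intermediate cover. The first estimate is then a direct application of the M\"uller--Puchta asymptotic to $\pi_1(S_0)$.

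For the second estimate, describe a degree-$n$ cover (up to isomorphism) by its monodromy homomorphism $\phi : \pi_1(S_0) \to S_n$, well-defined up to conjugation. Let $c_0 \in \pi_1(S_0)$ represent the non-separating curve $C_0$. The cycles of $\phi(c_0)$ correspond bijectively to the components of $\pi^{-1}(C_0)$, with cycle length equal to the covering degree of that component, so each maximal cover contributes a partition of $n$ and is counted in $m_n(S_0,k)$ for each distinct part $k$ appearing. In particular every cover contributes to at least one such term, giving $\sum_{k=1}^{n} m_n(S_0,k) \ge m_n(S_0)$. A cover fails to contribute to any term with $k \le n-1$ only if $\phi(c_0)$ is a single $n$-cycle, and an equidistribution argument (a corollary of the character-sum methods behind M\"uller--Puchta) shows that the fraction of maximal covers for which this happens is $O(1/n)$. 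Hence $\sum_{k=1}^{n-1} m_n(S_0,k) \ge (1-o(1))m_n(S_0)$, and pigeonhole yields some $k = k(n,g_0) \in \{1,\ldots,n-1\}$ with
\[
m_n(S_0,k) \;\ge\; \frac{(1-o(1))\,m_n(S_0)}{n-1} \;\sim\; \frac{(n!)^{g_0-2}}{n},
\]
which dominates $((n-1)!)^{g_0-2} = (n!)^{g_0-2}/n^{g_0-2}$ for $g_0 \ge 3$.

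The main obstacle is the equidistribution step: bounding the fraction of maximal covers in which the monodromy of a fixed generator $c_0$ falls into the small conjugacy class of $n$-cycles in $S_n$. This does not follow mechanically from the aggregate M\"uller--Puchta count, but should follow from a refinement of their character-sum analysis (weighting the relevant moment sum by a character supported on the $n$-cycle class), or from a direct averaging argument comparing the sizes of fibers of the map $\phi \mapsto [\phi(c_0)]$ over different conjugacy classes of $S_n$. Either route is a technical enhancement of M\"uller--Puchta rather than a new ingredient.
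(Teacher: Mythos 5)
Your outline matches the paper's: the first asymptotic is read off from M\"uller--Puchta (the paper cites their Corollary~3 and Section~4.4), and the second is a pigeonhole over $\sum_{k=1}^{n} m_n(S_0,k)\ge m_n(S_0)$; the only real issue in either proof is ruling out $k=n$. You propose to do this by showing $m_n(S_0,n)=o(m_n(S_0))$ via equidistribution of $\phi(c_0)$ over conjugacy classes of $S_n$, and you correctly flag that this does not follow mechanically from the aggregate M\"uller--Puchta count. That is the genuine gap in your write-up: the weighted character-sum refinement you gesture at is not carried out, and without it the step from $\sum_{k\le n} m_n(S_0,k)\ge m_n(S_0)$ to a useful lower bound on some $m_n(S_0,k)$ with $k\le n-1$ is incomplete.

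For comparison, the paper avoids $k=n$ by an elementary-looking combinatorial lemma, $m_n(S_0,1)\ge m_n(S_0,n)$, so that if the pigeonhole lands on $k=n$ one may pass to $k=1$. Their argument: pick a second non-separating simple closed curve $D_0$ meeting $C_0$ exactly once; if $C_0$ has a unique degree-$n$ lift $C_n$, cut $S_n$ along $C_n$ so that $D_0$ becomes an arc $I_0$ joining the two boundary circles, and since every lift of $I_0$ has degree one, conclude that every lift of $D_0$ has degree one, hence that the cover is also counted by $m_n(S_0,D_0,1)$. Be aware, though, that this deduction is itself suspect: a closed lift of $D_0$ of degree $d$ is a concatenation of $d$ degree-one lifts of $I_0$, and nothing in the cut picture forces $d=1$. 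Concretely, for $n$ prime the monodromy $\phi(c_0)=\phi(d_0)=(1\,2\cdots n)$ with all other generators trivial gives a connected primitive degree-$n$ cover in which $D_0$ has a unique degree-$n$ lift and no degree-one lift. So the equidistribution argument you sketch, if actually carried out, would be the more reliable way to close this theorem; it should be worked out rather than delegated, since the paper's own route does not supply a substitute you can fall back on.
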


\begin{proof} The first equality directly follows from Corollary 3 and the formula in Section 4.4 in \cite{muller-puchta}, which shows that  a random finite cover of a closed surface is maximal. 
It remains to prove the second inequality.

Since

$$
\sum\limits_{k=1}^{n} m_n(S_0,k) \ge m_n(S_0),
$$
it follows that for some $1 \le k \le n$, the second inequality in the statement of the theorem holds. The following lemma implies that this inequality holds for some $1 \le k \le (n-1)$.

\begin{lemma} The inequality $m_n(S_0,1) \ge m_n(S_0,n)$, holds for every $n$.
\end{lemma}

\begin{proof} Let $C_0$ and $D_0$ be two simple closed and non-separating curves on $S_0$, that intersect exactly once. Let $S_n$ be a degree $n$ cover of $S_0$, such that the curve $C_0$ has a degree $n$ lift which we denote by $C_n$. Then $C_n$ is the only lift of $C_0$. We show that in this case, every lift of the curve $D_0$ is a degree one lift. Let $\wt{S}_0=S_0 \setminus C_0$ and $\wt{S}_n=S_n \setminus C_n$, denote the two surfaces each having exactly two boundary components. Then $\wt{S}_n$ covers $\wt{S}_0$, because $C_n$ is the only lift of $C_0$ to $S_n$. After removing the curve $C_0$ from $S_0$, the closed curve $D_0$ becomes an interval $I_0 \subset \wt{S}_0$, whose endpoints lie on different boundary components of $\wt{S}_0$. Therefore, every lift of $I_0$ to $\wt{S}_n$ is a degree one lift. This proves the statement.

Restricting to  the cases when $S_n$ is a maximal cover, yields the inequality $m_n(S_0,C_0,n) \le m_n(S_0,D_0,1)$. Since $m_n(S_0,C_0,k)= m_n(S_0,D_0,k)= m_n(S_0,k)$, it follows that  $m_n(S_0,1) \ge m_n(S_0,n)$, and we have proved the lemma.

\end{proof}

This proves the theorem.

\end{proof}

Now fix a large $n$ and choose $1 \le k \le (n-1)$ so that the second inequality in Theorem \ref{thm-count} holds. We then amalgamate any two maximal covers $S_n(1)$ and $S_n(2)$ along the curves $C_n(1)$ and $C_n(2)$ that are both $k$ degree lifts of the curves $C(1)$ and $C(2)$ respectively (there may be more than one such $k$ degree lift, but we choose arbitrarily). Then the corresponding group $\rho_n(\pi_1(S_n)) <\Gamma$ is maximal surface subgroup of $\Gamma$.  Combining the above formula for $g_n$ with the Theorem \ref{thm-count}, we derive the estimate (\ref{est-lower}) for some $c_1>0$.

\end{document}